\def\mrm#1{\mathrm{#1}}
\def\mbb#1{\mathbb{#1}}
\theoremstyle{plain}
    \newtheorem{theorem}{Theorem}[section]
    \newtheorem{proposition}[theorem]{Proposition}
    \newtheorem{lemma}[theorem]{Lemma}
\theoremstyle{definition}
    \newtheorem{definition}[theorem]{Definition}
    \newtheorem{remark}[theorem]{Remark}
\def\Alphabet{A,B,C,D,E,F,G,H,I,J,K,L,M,N,O,P,Q,R,S,T,U,V,W,X,Y,Z}%  Capitalized Alphabet
\def\alphabet{a,b,c,d,e,f,g,h,i,j,k,l,m,n,o,p,q,r,s,t,u,v,w,x,y,z}%	lowercase alphabet
\def\endpiece{xxx}%									marks end of list
\def\makeAlphabet[#1]{\expandafter\makeA#1,xxx,}%		Ex. \makeAlphabet[A,B]
\def\makealphabet[#1]{\expandafter\makea#1,xxx,}%		Ex. \makealphabet[c,d]
\def\makeA#1,{\def\temp{#1}\ifx\temp\endpiece\else%
\mkbb{#1}\mkfrak{#1}\mkbf{#1}\mkcal{#1}\mkscr{#1}\mkbs{#1}\expandafter\makeA\fi}%
\def\makea#1,{\def\temp{#1}\ifx\temp\endpiece\else\mkfrak{#1}\mkbf{#1}\mkbs{#1}\expandafter\makea\fi}%
\def\mkbb#1{\expandafter\def\csname bb#1\endcsname{\mathbb{#1}}}%      Define bb
\def\mkfrak#1{\expandafter\def\csname fr#1\endcsname{\mathfrak{#1}}}%    Define frak
\def\mkbf#1{\expandafter\def\csname b#1\endcsname{\mathbf{#1}}}%           Define bold letters
\def\mkcal#1{\expandafter\def\csname c#1\endcsname{\mathcal{#1}}}%       Define calligraphy
\def\mkscr#1{\expandafter\def\csname s#1\endcsname{\mathscr{#1}}}%       Define script
\def\mkbs#1{\expandafter\def\csname bs#1\endcsname{{\boldsymbol{#1}}}}%       Define bold symbol
\def\makeop[#1]{\xmakeop#1,xxx,}%					Ex. \makeop[Hom,Spec]
\def\mkop#1{\expandafter\def\csname #1\endcsname{{\mathrm{#1}}}} % 
\def\xmakeop#1,{\def\temp{#1}\ifx\temp\endpiece\else\mkop{#1}\expandafter\xmakeop\fi}%
\def\makeup[#1]{\xmakeup#1,xxx,}%					Ex. \makeup[Hom,Spec]
\def\mkup#1{\expandafter\def\csname #1\endcsname{{\mathrm{#1}\,}}} % 
\def\xmakeup#1,{\def\temp{#1}\ifx\temp\endpiece\else\mkup{#1}\expandafter\xmakeup\fi}%
\begin{document}
\title{Decision problem on interactions}
\author[Wachi]{Hidetada Wachi}
\email[Wachi]{wachi213@keio.jp}
%\address{${}^\star$Keio Institute of Pure and Applied Sciences (KiPAS), Graduate School of Science and Technology, Keio University, 3-14-1 Hiyoshi, Kouhoku-ku, Yokohama 223-8522, Japan}
\address[Wachi]{Department of Mathematics, Faculty of Science and Technology, Keio University, 3-14-1 Hiyoshi, Kouhoku-ku, Yokohama 223-8522, Japan}

\date{\today}

\date{\today \quad ver.1.0.1}
\begin{abstract}
An interaction is a certain symmetric graph that describes the possible transition of states of adjacent sites of large-scale interacting systems. 
In the series of studies Bannai-Kametani-Sasada \cite{BKS}, Bannai-Sasada \cite{BS3}, they defined the notion of the irreducibly quantified interactions which is suitable for considering the hydrodynamic limits via the conserved quantities.  
In this paper, we prove that the property that an interaction is irreducibly quantified is decidable. 
\end{abstract}

%\thanks{The author is supported in part by the JSPS postdoctoral fellowship for research abroad.}

\maketitle

%%%%%%%%%%%%%%%%%%%%%%%%%%%%%%%%%%%%%%%%%%%%%%%%%%%%%%
%
\section{Introduction}
%
%%%%%%%%%%%%%%%%%%%%%%%%%%%%%%%%%%%%%%%%%%%%%%%%%%%%%%

The hydrodynamic limit is a mathematical method that derives macroscopic deterministic partial differential equations as limits from microscopic large-scale interacting systems. 
Varadahn's decomposition theorem plays an important role in proving the hydrodynamic limit for the non-gradient model. 
In the series of studies Bannai-Kametani-Sasada \cite{BKS} and Bannai-Sasada \cite{BS3}, they prove Varadahn's decomposition theorem for a general class of large-scale interacting systems. 
In their setting, microscopic large-scale interacting systems are constructed by interactions. 
In the recent study \cite{BS:Unif}, Bannai-Sasada generalizes the definition of interaction and computes the $0$-th uniform cohomology. 

\begin{definition}[{\cite{BS:Unif}*{Definition 1.2}}]
Let $S$ be a finite set. 
We define a \textit{interaction} on $S$ to be a pair $(S,\phi)$ consisting of a subset $\phi\subset (S\times S)\times (S\times S)$ such that $(S\times S,\phi)$ form a symmetric digraph. 
We call $S$ the \textit{state space}. 
\end{definition}

In the theory of hydrodynamic limits, we are interested in conserved quantities of the interaction. 
A map $\xi:S\rightarrow \mathbb{R}$ is a \textit{conserved quantity} if for any $(s,t)$, $(s',t')\in S\times S$ such that $((s,t),(s',t'))\in \phi$,  we have $\xi(s)+\xi(t)=\xi(s')+\xi(t')$. 
Bannai-Koriki-Sasada-Wachi-Yamamoto \cite{BKSWY} provide classifications of interactions with respect to the space of conserved quantities. 

In the studies \cite{BKS} and \cite{BS3}, they consider the condition of interactions, which is called \textit{irreducibly quantified}, as the suitable assumption for the hydrodynamic limit.
Let $(X,E)$ be a finite connected symmetric digraph. 
In this paper, we assume that every graph has no loops or multiple edges.
For any conserved quantity $\xi$ of the interactions $(S,\phi)$, we define a function $\xi_X:S^X\rightarrow \mbb{R}$ by 
\begin{equation*}
    \xi_X(\eta)=\sum_{x\in X}\xi(\eta_x),
\end{equation*}
where $\eta=(\eta_x)\in S^X$.
We define a subset $\Phi_E\subset S^X\times S^X$ by 
\begin{equation*}
   \Phi_E=\{(\eta,\eta')\in S^X\times S^X\mid \exists (x,y)\in E,\,(\eta_x,\eta_y),(\eta'_x,\eta'_y))\in \phi,\,\forall z\neq x,y,\,\eta_z=\eta'_z\}.
\end{equation*}
We note that $(S^X,\Phi_E)$ is a symmetric digraph. 
Then we define irreducibly quantified interactions as follows. 

\begin{definition}[{\Cref{def:irreq}}]
We say an interaction $(S,\phi)$ is \textit{irreducibly quantified} if for any finite connected symmetric digraph $(X,E)$ and any $\eta,\eta'\in S^X$, if $\xi_X(\eta)=\xi_X(\eta')$ for any conserved quantity $\xi$ of $(S,\phi)$, then $\eta$ and $\eta'$ are in the same connected component of $(S^X,\Phi_E)$.
\end{definition}

A priori, deciding whether a given interaction is irreducibly quantified may require checking infinitely many conditions. 
However, in this paper, we prove that this can be determined in a finite number of steps.
More precisely, the main result of this paper is the following. 

\begin{theorem}[{\Cref{thm:decidable}}]\label{thm:decidable-intro}
The question of whether an interaction is irreducibly quantified is \textbf{decidable}. 
\end{theorem}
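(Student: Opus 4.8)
The plan is to convert the apparently infinitary quantification over all graphs and configurations into a finite computation, in three stages: a linear‑algebraic reformulation of the charge hypothesis, a structural reduction of the class of test graphs, and a computable bound on the size of a minimal counterexample.

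First I would make the hypothesis ``$\xi_X(\eta)=\xi_X(\eta')$ for every conserved quantity $\xi$'' algebraic. To each edge $((s,t),(s',t'))\in\phi$ attach the move vector $m=e_{s'}+e_{t'}-e_s-e_t\in\mbb{Z}^S$, let $M\subset\mbb{R}^S$ be their real span and $L\subset\mbb{Z}^S$ their integer span, and write $\tau(\eta)\in\mbb{Z}_{\ge 0}^S$ for the occupation vector $\tau(\eta)_s=\#\{x\in X:\eta_x=s\}$. Then $\xi_X(\eta)=\langle\tau(\eta),\xi\rangle$, and $\xi$ is conserved exactly when $\xi\perp M$; hence the charge hypothesis is equivalent to $\tau(\eta)-\tau(\eta')\in M$, a condition decidable by rational linear algebra. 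Since a single transition changes $\tau$ by a move vector, $\tau(\eta)-\tau(\eta')\in L$ is necessary for $\eta,\eta'$ to lie in one component. This already isolates one obstruction family: if $L$ is not saturated, i.e.\ $L\subsetneq M\cap\mbb{Z}^S$, there are hidden $\mbb{Z}/k$-valued invariants invisible to every real conserved quantity, and padding with a common background realizes a non‑connected pair of matching charge on a suitable connected graph; saturation of $L$ is checkable via Smith normal form.

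Second I would cut down the graphs. Because $\Phi_E$ is monotone in $E$ while the charge hypothesis is independent of $E$, connectivity of $\eta,\eta'$ across any spanning tree of $(X,E)$ forces connectivity across $(X,E)$ itself. Hence $(S,\phi)$ is irreducibly quantified if and only if the defining implication holds for all finite trees; this is the genuinely hard subclass, and one cannot reduce further to paths, since a star admits configurations unreachable from each other that no Hamiltonian path detects.

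Third, and this is the crux, I would prove a computable bound $N=N(|S|,|\phi|)$ such that a counterexample tree exists if and only if one exists on a tree with at most $N$ vertices. The idea is a stabilization/pumping argument: organizing the reachability analysis along a tree so that the data crossing each edge is a finite ``boundary profile'' bounded in terms of $|S|$, $|\phi|$ and $L$, a sufficiently large minimal counterexample must repeat a profile, and the repeated segment can be excised to produce a strictly smaller counterexample, contradicting minimality. The main obstacle is precisely this surgery: the excision must preserve the matching‑charge condition — a lattice condition, hence manageable — and, far more delicately, must not create new connecting sequences of moves between the two configurations; establishing that reachability behaves monotonically under such contraction, uniformly over all trees, is the hard part, and is where saturation of $L$ and the finiteness of the conserved structure must be exploited. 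Granting the bound, the property reduces to checking, for each tree on at most $N$ vertices and each pair of configurations whose $\tau$‑difference lies in $M$, whether they lie in the same component of $(S^X,\Phi_E)$ — finitely many finite reachability problems — so the property is decidable.
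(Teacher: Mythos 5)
Your proposal stands or falls on Stage 3, and Stage 3 is not a proof: you state that a computable bound $N(|S|,|\phi|)$ on the size of a minimal counterexample tree ``would'' follow from a pumping/excision argument, and you yourself identify the fatal obstacle --- that excising a repeated segment must not create new connecting move sequences --- without resolving it. This is precisely where all the difficulty of the theorem lives, and there is no evident reason the obstacle can be overcome in the form you describe: the ``boundary profile'' across a tree edge is not finite data bounded by $|S|$, $|\phi|$ and $L$, since the relevant information is which pairs of configurations on the two sides are mutually reachable, and this grows with the size of the sides. ``Granting the bound'' is granting the theorem.

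There is also a structural gap in Stage 1: the obstructions you extract (the real span $M$ for the charge hypothesis, and saturation of the integer lattice $L$, checkable by Smith normal form) are abelianized, linear-algebraic invariants, and they do not capture the full obstruction to connectivity. The paper's route makes this precise: for an exchangeable interaction, connectivity of $\eta,\eta'$ in $(S^X,\Phi_E)$ is equivalent (Lemma 3.7) to equality $\eta_M=\eta'_M$ in the finitely presented commutative semigroup $M(S,\phi)=\langle a_1,\dots,a_n\mid a_ia_j=a_ka_l,\ ((i,j),(k,l))\in\phi\rangle$ --- so the graph drops out entirely, with no need for your tree reduction or size bound --- and irreducible quantifiability becomes equivalent (Theorem 3.8) to $M(S,\phi)$ being cancellative \emph{and} power-cancellative. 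Power-cancellativity is the lattice/torsion phenomenon your saturation condition approximates (decidable by Smith normal form, as in the paper), but cancellativity is a genuinely semigroup-theoretic condition: $ac=bc$ can fail to imply $a=b$ even when every $\mbb{Z}$-linear and $\mbb{R}$-linear invariant of $a$ and $b$ agrees, i.e.\ even when images in the Grothendieck group coincide. Its decidability is not elementary; the paper has to invoke the theorem of Narendran--\'{O}'D\'{u}nlaing that cancellativity of finitely presented commutative semigroups is decidable. Your proposal has no mechanism that detects this failure mode, so even if your Stage 3 bound were established, the algorithm you describe in Stages 1--2 would be testing a condition strictly weaker than the one required.
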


To prove our main theorem, we reduce the problem to that of semigroups. 
A semigroup $M=(M,\cdot)$ is a pair consisting of a set $M$ and a binary operation $\cdot:M\times M\rightarrow M$ which satisfies associativity $(a\cdot b)\cdot c=a\cdot (b\cdot c)$ for any $a,b,c\in M$.
We will apply the result of Narendran-\'{O}'D\'{u}nlaing \cite{NO89} to prove our result. 
Narendran-\'{O}'D\'{u}nlaing studied the decision problem on the finitely presented semigroup, and prove the following theorem.

\begin{theorem}[{\cite{NO89}*{Theorem 5.8}}]\label{thm:no}
    The question of whether a finitely presented commutative semigroup is cancellative is decidable. 
\end{theorem}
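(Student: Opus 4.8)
The plan is to reduce cancellativity to an equality of congruences, each of which can be described by a finite computation. First I would fix a finite presentation $\langle x_1,\dots,x_n\mid R\rangle$ with $R=\{(u_i,v_i)\}_{i=1}^m$, and, identifying a commutative word with its exponent vector, regard the free commutative monoid as $\mathbb{N}^n$; passing from the given semigroup to its associated monoid does not affect cancellativity. Let $\sim$ be the congruence on $\mathbb{N}^n$ generated by $R$, so $M=\mathbb{N}^n/{\sim}$. The guiding principle is the classical fact that a commutative monoid is cancellative if and only if it embeds into its group completion, here $G=\mathbb{Z}^n/L$ with $L=\langle u_i-v_i : 1\le i\le m\rangle\subseteq\mathbb{Z}^n$. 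Writing $u\equiv v$ for $u-v\in L$, the relation $\equiv$ is exactly the congruence pulled back from equality in $G$ along $\mathbb{N}^n\to G$, so the canonical map $M\to G$ is injective precisely when $\sim$ and $\equiv$ coincide. This gives the reduction
\[ M \text{ is cancellative}\iff {\sim}={\equiv}\ \text{on}\ \mathbb{N}^n. \]

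Since each relation vector $u_i-v_i$ lies in $L$, one always has ${\sim}\subseteq{\equiv}$, so the real content is to decide the reverse inclusion. The difficulty --- and the reason decidability is not obvious --- is that $\sim$ is presented only generatively while cancellativity quantifies over infinitely many elements, so a naive search for a witness $(u,v)$ with $u\equiv v$ but $u\not\sim v$ need not halt. (Deciding a single instance $u\sim v$ is already classical, being the solvability of the word problem for finitely presented commutative semigroups.) To obtain a finite criterion I would pass to binomial ideals over a field $k$: set $I_{\sim}=\langle x^{u_i}-x^{v_i}:1\le i\le m\rangle\subseteq k[x_1,\dots,x_n]$, so that $k[x]/I_{\sim}$ is the monoid algebra $k[M]$. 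Because the monomials descend to a $k$-basis indexed by $\sim$-classes, $I_{\sim}$ is spanned by the pure differences $x^u-x^v$ with $u\sim v$, and likewise the lattice ideal $I_L=\langle x^u-x^v:u\equiv v\rangle$ is spanned by the pure differences with $u\equiv v$; hence ${\sim}={\equiv}$ if and only if $I_{\sim}=I_L$.

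The step I expect to be the crux is the structural identity, due to Eisenbud--Sturmfels in their study of binomial ideals, that the lattice ideal is the saturation of the generating binomial ideal by the product of the variables:
\[ I_L=\bigl(I_{\sim}:(x_1\cdots x_n)^{\infty}\bigr). \]
Granting this, the algorithm is clear: compute the saturation by the standard Gr\"obner-basis method (adjoin a variable $t$, form $I_{\sim}+\langle 1-t\,x_1\cdots x_n\rangle$ in $k[x_1,\dots,x_n,t]$, and eliminate $t$ with an elimination order), then compare reduced Gr\"obner bases of $I_{\sim}$ and of the saturation. They coincide exactly when $M$ is cancellative, which yields a terminating decision procedure. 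The points needing care are precisely the justification (or correct citation) of this saturation identity together with the termination and correctness of the saturation algorithm --- this is what converts the unbounded quantifier into a finite computation --- and the routine bookkeeping surrounding the empty word in the passage between the given semigroup and the monoid $M=\mathbb{N}^n/{\sim}$.

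As a cross-check and an alternative, I note that one can avoid commutative algebra entirely: by the Eilenberg--Sch\"utzenberger theorem the congruence ${\sim}\subseteq\mathbb{N}^{2n}$ is an effectively computable semilinear set, while ${\equiv}$ is semilinear because it is cut out by the Presburger condition $u-v\in L$; since equality of semilinear sets (equivalently, the truth of Presburger sentences) is decidable, this again decides ${\sim}={\equiv}$ and hence cancellativity.
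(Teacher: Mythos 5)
This statement is quoted verbatim from \cite{NO89} and the paper gives no proof of it, so there is no internal argument to compare against; judging your proposal on its own terms, it has a genuine gap, and it sits exactly at the point you dismiss as ``routine bookkeeping surrounding the empty word.'' Your central criterion ``$M$ is cancellative $\iff{\sim}={\equiv}$'' is correct for the \emph{monoid} $\mathbb{N}^n/{\sim}$, but the theorem concerns the given \emph{semigroup}, and the two properties are not equivalent. Since both sides of every defining relation are nonempty words, the class of $0$ in $\mathbb{N}^n/{\sim}$ is the singleton $\{0\}$, so $\mathbb{N}^n/{\sim}$ is the given semigroup $M$ with a new identity $1$ adjoined; and if $M$ is cancellative but contains an \emph{internal} identity $e$, then in the monoid $e\cdot e=e=e\cdot 1$ with $e\neq 1$, so the monoid is never cancellative. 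Concretely, for $\langle a\mid a^3=a\rangle$ the semigroup is $\{a,a^2\}\cong\mathbb{Z}/2\mathbb{Z}$, hence cancellative; yet $L=2\mathbb{Z}$ gives $0\equiv 2$ while $0\not\sim 2$, and in your ideal language $\bigl((x-x^3):x^\infty\bigr)=(1-x^2)\neq(x-x^3)$. Thus both your Gr\"obner algorithm and your Presburger alternative (which you also phrase as deciding ${\sim}={\equiv}$) output ``not cancellative'' on a cancellative input: the procedure decides the strictly stronger property ``cancellative and without identity element,'' not cancellativity.

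The repair is to restrict to nonzero vectors: the semigroup $M$ is cancellative iff it embeds in its group of fractions, which by the same adjunction computation is again $\mathbb{Z}^n/L$, iff $u\equiv v$ implies $u\sim v$ for all \emph{nonzero} $u,v\in\mathbb{N}^n$; your saturation identity computes the full lattice ideal and cannot see this restriction directly, so the ideal-theoretic route needs modification (for instance, first decide --- a Presburger question over ${\sim}$ --- whether $M$ has an internal identity class $[w_0]$, and if so re-present $M$ as the monoid $\mathbb{N}^n/{\sim'}$ with the extra relation $w_0\sim' 0$ before applying your test). Your semilinear route, by contrast, is salvageable essentially as stated: express cancellativity of the semigroup directly as the Presburger sentence $\forall u,v,w\neq 0\,\bigl((w+u,w+v)\in{\sim}\Rightarrow(u,v)\in{\sim}\bigr)$, which is decidable once ${\sim}\subseteq\mathbb{N}^{2n}$ is \emph{effectively} semilinear --- an input (Eilenberg--Sch\"utzenberger/Taiclin) whose effectivity is itself load-bearing and deserves a careful citation rather than a passing mention. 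Note finally that for the homogeneous presentations $M(S,\phi)$ actually used in this paper the relations preserve length, so $w\sim w+v$ forces $v=0$ and the monoid and semigroup notions coincide; your argument is therefore sound for the paper's application, but not as a proof of the quoted theorem of \cite{NO89}.
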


In \Cref{sec:commutative-monoid}, we construct a semigroup $M(S,\phi)$ associated to an interaction $(S,\phi)$. 
Let $S=\{1,\dots,n\}$. 
We define the semigroup $M(S,\phi)$ as 
\begin{equation*}
    M(S,\phi):=\langle a_1,\dots,a_n\mid a_ia_j=a_ka_l,\,\forall ((i,j),(k,l))\in\phi\rangle.
\end{equation*}
When the interaction is exchangeable, this semigroup $M(S,\phi)$ is commutative.  
We will reduce the question of whether the interaction $(S,\phi)$ is irreducibly quantified to the question of whether the semigroup $M(S,\phi)$ is cancellative and power-cancellative. 
More precisely, we will prove the following theorem. 
Combining this proposition with \Cref{thm:no}, we conclude our main result \Cref{thm:decidable-intro}. 

\begin{theorem}[{\Cref{thm:ccpcs-is-irred-q}}]\label{thm:ccpcs-is-irred-q-intor}
 A separable, exchangeable interaction $(S,\phi)$ is irreducibly quantified if and only if the commutative semigroup $M(S,\phi)$ is cancellative and power-cancellative. 
\end{theorem}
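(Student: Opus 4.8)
The plan is to translate both sides into lattice-theoretic statements about occupation vectors. Identify $S=\{1,\dots,n\}$ and, for $\eta\in S^X$, write $m(\eta)\in\mbb{N}^n$ for its occupation vector $m(\eta)_s=\#\{x\in X:\eta_x=s\}$. Let $\Lambda\subset\mbb{Z}^n$ be the subgroup generated by the relation vectors $e_i+e_j-e_k-e_l$ for $((i,j),(k,l))\in\phi$, let $R=\Lambda\otimes\mbb{R}$, and let $\Lambda^{\mrm{sat}}=\{v\in\mbb{Z}^n:kv\in\Lambda\text{ for some }k\ge 1\}=R\cap\mbb{Z}^n$ be its saturation. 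Since $\xi_X(\eta)=\langle m(\eta),\xi\rangle$, a conserved quantity is precisely an element of $R^{\perp}\subset\mbb{R}^n$; hence the hypothesis that $\xi_X(\eta)=\xi_X(\eta')$ for every conserved $\xi$ is equivalent to $m(\eta)-m(\eta')\in (R^{\perp})^{\perp}\cap\mbb{Z}^n=\Lambda^{\mrm{sat}}$.

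Next I would record the two semigroup facts. A commutative semigroup is cancellative iff the canonical map to its Grothendieck group $G=\mbb{Z}^n/\Lambda$ is injective; since the image of $\prod_x a_{\eta_x}$ in $G$ is the class of $m(\eta)$, injectivity says exactly that $m(\eta)-m(\eta')\in\Lambda$ implies $\prod_x a_{\eta_x}=\prod_x a_{\eta'_x}$ in $M(S,\phi)$. Power-cancellativity is equivalent to $G$ being torsion-free, i.e.\ to $\Lambda=\Lambda^{\mrm{sat}}$: any $v\in\Lambda^{\mrm{sat}}\setminus\Lambda$ lies in the sum-zero hyperplane and so may be written $v=m(\eta)-m(\eta')$ for suitable configurations, producing torsion in $G$, and conversely. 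Thus \emph{cancellative and power-cancellative} is equivalent to the single implication: $m(\eta)-m(\eta')\in\Lambda^{\mrm{sat}}$ forces $\prod_x a_{\eta_x}=\prod_x a_{\eta'_x}$ in $M(S,\phi)$.

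The heart of the proof is to identify $\Phi_E$-connectivity with equality in $M(S,\phi)$, and this is where I expect the main obstacle. One direction is routine: a single defining relation applied to a subword is exactly a $\Phi_E$-move carried out on an edge whose two endpoints carry the relevant states, so $\eta\sim_{\Phi_E}\eta'$ always forces $\prod_x a_{\eta_x}=\prod_x a_{\eta'_x}$. For the converse I must simulate, inside an arbitrary connected graph, a relation applied to two sites that need not be adjacent, and this is exactly what exchangeability provides: it supplies all transposition moves $((i,j),(j,i))\in\phi$, so adjacent sites may swap their states. Because the transpositions attached to the edges of a connected graph generate the full symmetric group on its vertices, any two configurations with the same occupation vector are $\Phi_E$-connected; a relation on a distant pair is then realized by permuting the two relevant states onto a single edge, applying the relation, and permuting back. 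The delicate points, which I would treat carefully, are making this routing uniform over all connected $(X,E)$ with at least one edge and handling the degenerate cases (equal states, or the edgeless single-vertex graph); for the single-vertex graph, where no moves exist, it is separability that guarantees distinct states are distinguished by conserved quantities, so the implication cannot fail there.

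With these three ingredients the equivalence follows. If $M(S,\phi)$ is cancellative and power-cancellative and $\xi_X(\eta)=\xi_X(\eta')$ for all conserved $\xi$, then $m(\eta)-m(\eta')\in\Lambda^{\mrm{sat}}=\Lambda$, so the two products agree in $M(S,\phi)$, and the transport step gives $\eta\sim_{\Phi_E}\eta'$ in every connected $(X,E)$ with an edge, while separability settles the single-vertex case; hence $(S,\phi)$ is irreducibly quantified. Conversely, applying irreducible quantification to complete graphs shows that $m(\eta)-m(\eta')\in\Lambda^{\mrm{sat}}$ implies $\prod_x a_{\eta_x}=\prod_x a_{\eta'_x}$; restricting to differences in $\Lambda\subseteq\Lambda^{\mrm{sat}}$ yields cancellativity, and the realizability of any torsion class as some $m(\eta)-m(\eta')$ yields $\Lambda=\Lambda^{\mrm{sat}}$, i.e.\ power-cancellativity. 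I would close by checking that each appeal to exchangeability and separability matches exactly the standing hypotheses of the statement.
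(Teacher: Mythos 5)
Your proposal is correct in substance, and its core---the equivalence between $\Phi_E$-connectivity and equality in $M(S,\phi)$, proved by using exchangeability to swap adjacent sites, generating all permutations of a configuration along a connected graph, and routing a distant relation pair onto an actual edge---is precisely the paper's key lemma (\Cref{lem:path-and-equation}, built on \Cref{lem:app-exchangeable}); your forward direction (complete graphs plus surjectivity of $\eta\mapsto\eta_M$, \Cref{lem:inter-to-semig-surj}) also matches the paper. Where you genuinely diverge is the converse. The paper invokes Grillet's embedding theorem (\Cref{prop:embedding}, i.e.\ \cite{CS}*{II, Corollary 7.4}) to embed the cancellative, power-cancellative semigroup into $\mathbb{N}^d$ and then pulls the coordinate functionals back to conserved quantities via \Cref{lem:isom-of-vector-space}. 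You instead note that ``equal conserved sums'' means $m(\eta)-m(\eta')\in (R^{\perp})^{\perp}\cap\mathbb{Z}^n=\Lambda^{\mathrm{sat}}$, and that cancellativity together with power-cancellativity is equivalent to the single implication ``$p-q\in\Lambda^{\mathrm{sat}}$ forces $a^p=a^q$'': cancellativity is exactly injectivity of $M(S,\phi)\to\mathbb{Z}^n/\Lambda$, so $k(p-q)\in\Lambda$ yields $(a^p)^k=(a^q)^k$, and power-cancellativity finishes. This is more elementary and self-contained---it trades a nontrivial structure theorem for linear algebra over $\mathbb{Z}$---at the modest cost of setting up the occupation-vector dictionary. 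Two inaccuracies to repair in a write-up: exchangeability does \emph{not} literally give $((i,j),(j,i))\in\phi$, only a path from $(i,j)$ to $(j,i)$ in $(S\times S,\phi)$, so an adjacent swap is a path of $\Phi_E$-moves rather than a single move (harmless for connectivity, and this is how \Cref{lem:app-exchangeable} states it); and your side claim that power-cancellativity \emph{alone} is equivalent to torsion-freeness of $\mathbb{Z}^n/\Lambda$ is unjustified without cancellativity (equality in $M(S,\phi)$ implies, but is not implied by, congruence of exponent vectors modulo $\Lambda$)---only the combined equivalence you actually use is correct, so it should be stated that way.
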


The construction of this paper is as follows. 
In \Cref{sec:interaction}, we recall the definition and some properties of interactions.
Next, in \Cref{sec:commutative-monoid}, we will introduce the commutative semigroup associated with interactions. 
We recall the basic properties of commutative semigroups. 
Finally, we prove our main result \Cref{thm:decidable-intro}.
At the end of this paper, we give a list of irreducibly quantified interactions for $S=\{0,1,2,3,4\}$. 

%%%%%%%%%%%%%%%%%%%%%%%%%%%%%%%%%%%%%%%%%%%%%%%%%%%%%%
%
\section{Interaction}\label{sec:interaction}
%
%%%%%%%%%%%%%%%%%%%%%%%%%%%%%%%%%%%%%%%%%%%%%%%%%%%%%%

In this section, we recall the definition of interactions. 

\begin{definition}\label{def:interaction}
Let $S$ be a finite set. 
\begin{enumerate}
\item We define a \textit{interaction} on $S$ to be a pair $(S,\phi)$ consisting of a subset $\phi\subset (S\times S)\times (S\times S)$ such that $(S\times S,\phi)$ form a symmetric digraph. 
\item We say that an interaction $(S,\phi)$ is \textit{exchangeable} if for any $(s,t)\in S\times S$, $(s,t)$ and $(t,s)$ are in the same connected component of $(S\times S,\phi)$. 
\end{enumerate}
\end{definition}

We first prove the decidability of exchangeability. 

\begin{proposition}\label{prop:exchangeable-is-decidable}
The question of whether an interaction $(S,\phi)$ is exchangeable is decidable. 
\end{proposition}

\begin{proof}
For any $(s,t)\in S\times S$, we construct a set $S_{s,t}\subset S\times S$ as follows.
\begin{description}
    \item[Step 1] Let $S_{s,t}^0=\{(s,t)\}$. 
    \item[Step 2] For any $i\in \mathbb{Z}$ and $(u,v)\in S_{s,t}^i$, let $T_{u,v}^i:=\{(u',v')\mid ((u,v),(u',v'))\in\phi\}$. We define $S_{s,t}^{i+1}=\bigcup_{(u,v)\in S_{s,t}^i}T_{s,t}^i$. 
\end{description}
Let $S_{s,t}:=\bigcup_{i=0}^\infty S_{s,t}^i$.
Since $S$ is finite, there is $N\in\mathbb{N}$ such that $S_{s,t}^N=S_{s,t}$. 
Then $S_{s,t}^N$ is a set of elements in $S\times S$ which are in the connected component containing $(s,t)$ in $(S\times S,\phi)$. 
Therefore we can check whether an element $(t,s)\in S\times S$ in the connected component containing $(s,t)$ in $(S\times S,\phi)$ by checking whether $(t,s)\in S_{s,t}^N$. 
Therefore, since $S\times S$ is finite, the question of whether an interaction $(S,\phi)$ is exchangeable is decidable.  
\end{proof}

In the study of the hydrodynamic limit, we are interested in the conserved quantity of an interaction. 
We fix a base point $*\in S$.
Then the conserved quantity is defined as follows. 

\begin{definition}\label{def:conserved-quantity}
\begin{enumerate}
    \item We define a \textit{conserved quantity} to be a function $\xi:S\rightarrow \mbb{R}$ such that for any $((s,t),(s',t'))\in \phi$, we have $\xi(s)+\xi(t)=\xi(s')+\xi(t')$. 
    We let $\Map^\phi(S,\mbb{R})$ denote the $\mbb{R}$-vector space of conserved quantities.
    \item We let $\Consv^\phi(S)$ denote the space of conserved quantities modulo the space of constant functions.
    By abuse of notation, for any $\xi\in \Consv^\phi(S)$ we also let $\xi$ denote its representative.
\end{enumerate}
\end{definition}

\begin{remark}\label{rem:basis}
    Since $\Map(S,\mbb{R})\cong \mbb{R}^S$ and $S$ is finite, $\mbb{R}$-vector subspace $\Map^\phi(S,\mbb{R})$ is finite dimensional. 
    Moreover, the basis of these vector spaces can be computed by solving a system of linear equations
    \begin{equation*}
        x_s+x_t=x_{s'}+x_{t'}\quad ((s,t),(s',t'))\in \phi.
    \end{equation*}
    In particular, we can obtain a basis of the spaces $\Map^\phi(S,\mbb{R})$ whose components are integers. 
    Moreover, if you fix a state $*\in S$, a basis of subspace $\Map^\phi(S,\mbb{R})$ consisting of functions $\xi$ such that $\xi(*)$ gives a basis of the $\mbb{R}$-vector space $\Consv^\phi(S)$.
\end{remark}

\begin{definition}\label{def:separable}
We say that an interaction $(S,\phi)$ is \textit{separable} 
if for any $s,t\in S$ if $\xi(s)=\xi(t)$ for any $\xi\in \Consv^\phi(S)$, we have $s=t$. 
\end{definition}

Since a constant function $f:S\rightarrow \mbb{R}$ satisfies $f(s)=f(t)$ for any $s,t\in S$, the notion of separability does not depend on the choice of a representative of $\xi\in \Consv^\phi(S)$.
By using following 

\begin{lemma}\label{lem:separable-decide}
    The question of whether an interaction $(S,\phi)$ is separable is decidable. 
\end{lemma}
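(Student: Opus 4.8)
The plan is to reduce separability to a finite, computable linear-algebra check by exploiting the finite-dimensionality recorded in \Cref{rem:basis}. First I would invoke that remark to obtain, in finitely many steps, an explicit basis $\xi_1,\dots,\xi_d$ of the $\mbb{R}$-vector space $\Consv^\phi(S)$ whose components are integers (or at least rational); these are produced by solving the finite linear system $x_s+x_t=x_{s'}+x_{t'}$ ranging over the finitely many pairs $((s,t),(s',t'))\in\phi$. The key observation is that the condition ``$\xi(s)=\xi(t)$ for every $\xi\in\Consv^\phi(S)$'' is equivalent to ``$\xi_i(s)=\xi_i(t)$ for every $i=1,\dots,d$,'' since every conserved quantity is an $\mbb{R}$-linear combination of the basis elements and the constant functions (which contribute nothing to $\xi(s)-\xi(t)$). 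This collapses an a priori infinite quantifier over all conserved quantities into a finite conjunction over a computed basis.

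Given this reformulation, the decision procedure is immediate. For each of the finitely many ordered pairs $(s,t)\in S\times S$ with $s\neq t$, I would check whether $\xi_i(s)=\xi_i(t)$ holds simultaneously for all $i=1,\dots,d$. The interaction $(S,\phi)$ is separable precisely when no such pair exists; that is, for every distinct $s,t$ there is at least one basis element $\xi_i$ with $\xi_i(s)\neq\xi_i(t)$. Since $S$ is finite, $S\times S$ is finite, and each check compares finitely many integer (or rational) values, the entire procedure terminates. Hence separability is decidable.

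The only subtlety worth spelling out is why it suffices to test a single basis rather than all representatives: because separability is defined in terms of $\Consv^\phi(S)$ (conserved quantities modulo constants), and the difference $\xi(s)-\xi(t)$ is visibly independent of the choice of representative, the test is well-defined, and passing to the integer basis loses no information. I do not anticipate a genuine obstacle here; the content is entirely in packaging the finite-dimensionality from \Cref{rem:basis} into an algorithm, and the ``hard part,'' such as it is, is merely confirming that the quantifier over the infinite set of conserved quantities reduces faithfully to the finite basis test.
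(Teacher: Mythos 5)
Your proof is correct and takes essentially the same approach as the paper: compute an explicit basis of $\Consv^\phi(S)$ via \Cref{rem:basis}, reduce the quantifier over all conserved quantities to a finite conjunction over that basis, and check the finitely many distinct pairs $(s,t)\in S\times S$. Incidentally, your formulation of the criterion---separable iff for each distinct pair some basis element satisfies $\xi_i(s)\neq\xi_i(t)$---is the correct one, and it silently fixes a typo in the paper's proof, which writes ``$\xi_i(s)=\xi_i(t)$'' where the inequality is intended.
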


\begin{proof}
    Let $\{\xi_i\}_{i\in I}$ be a basis of $\Consv^\phi(S)$ computed as in \Cref{rem:basis}. 
    Since the space of conserved quantities are $\mbb{R}$-vector space, the interaction $(S,\phi)$ is separable if and only if for any $s,t\in S$ such that $s\neq t$, there exists $i\in I$ such that $\xi_i(s)=\xi_i(t)$. 
    Therefore we can check whether an interaction $(S,\phi)$ is separable by comparing values of conserved quantities $\{\xi_i\}_{i\in I}$ for each distinct pair $(s,t)\in S^2$. 
    Since the number of distinct pair $(s,t)\in S^2$ is finite and $I$ is finite, the question of whether an interaction $(S,\phi)$ is separable is decidable. 
\end{proof}

Let $(X,E)$ be a finite connected symmetric digraph. 
We note that $E=\emptyset$ when $|X|=1$.
We say that $(X,E)$ is \textit{complete} if $(x,y)\in E$ for any $x,y\in X$. 
For any interaction $(S,\phi)$ and finite connected symmetric digraph $(X,E)$, we let $S^X:=\Map(X,S)$.
For any $\eta\in S^X$ and $x\in X$, we let $\eta_x$ denote the point $\eta(x)$.
We define the subset $\Phi_E\subset S^X\times S^X$ by 
\begin{equation*}
   \Phi_E=\{(\eta,\eta')\in S^X\times S^X\mid \exists (x,y)\in E,\,((\eta_x,\eta_y),(\eta'_x,\eta'_y))\in \phi,\,\forall z\neq x,y,\,\eta_z=\eta'_z\}.
\end{equation*}
Since $(S\times S,\phi)$ is symmetric graph, the digraph $(S^X,\Phi_E)$ is also symmetric. 
Let $\xi$ be a conserved quantity of $(S,\phi)$. 
We define a map $\xi_X:S^X\rightarrow \mathbb{R}$ by 
\begin{equation*}
\xi_X(\eta):=\sum_{x\in X}\xi(\eta_x)
\end{equation*}
for any $\eta\in S^X$.
We define the irreducibly quantified interaction as follows.

\begin{definition}\label{def:irreq}
We say that an interaction $(S,\phi)$ is \textit{irreducibly quantified} if for any finite connected symmetric digraph $(X,E)$ and for any $\eta,\eta'\in S^X$, if $\xi_X(\eta)=\xi_X(\eta')$ for any conserved quantity $\xi\in\Consv^\phi(S)$, then $\eta$ and $\eta'$ are in the same connected component of $(S^X,\Phi_E)$.
\end{definition}

We can easily observe that an irreducibly quantified interaction is always exchangeable and separable as follows.

\begin{lemma}\label{lem:irred-q-implies-exchangeable}
If an interaction $(S,\phi)$ is irreducibly quantified, then $(S,\phi)$ is exchangeable and separable.
\end{lemma}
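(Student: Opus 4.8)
The plan is to read off each of the two properties from \Cref{def:irreq} by specializing it to a single, minimal graph $(X,E)$ for which the conclusion degenerates exactly to the property in question. For separability I would take the one-point graph $X=\{*\}$ with $E=\emptyset$. Then $S^X$ is canonically identified with $S$, and since $E$ is empty we have $\Phi_E=\emptyset$, so every point of $(S^X,\Phi_E)$ is its own connected component. Now suppose $s,t\in S$ satisfy $\xi(s)=\xi(t)$ for every $\xi\in\Consv^\phi(S)$. Viewing $s,t$ as elements of $S^X$ gives $\xi_X(s)=\xi(s)=\xi(t)=\xi_X(t)$ for all such $\xi$, so the hypothesis of \Cref{def:irreq} is met; irreducible quantification then places $s$ and $t$ in the same component of $(S^X,\Phi_E)$, which by the above forces $s=t$. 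This is precisely \Cref{def:separable}.

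For exchangeability I would instead take the two-point graph $X=\{x,y\}$ carrying its unique edge, so that $S^X$ is identified with $S\times S$ via $\eta\mapsto(\eta_x,\eta_y)$. Fix $(s,t)\in S\times S$ and set $\eta=(s,t)$ and $\eta'=(t,s)$. Because a conserved quantity only records the unordered sum, for every $\xi\in\Consv^\phi(S)$ we have $\xi_X(\eta)=\xi(s)+\xi(t)=\xi(t)+\xi(s)=\xi_X(\eta')$, so once again the hypothesis of \Cref{def:irreq} holds. Irreducible quantification then yields that $\eta$ and $\eta'$ lie in the same connected component of $(S^X,\Phi_E)$; unwinding the identification $S^X\cong S\times S$, this is the assertion that $(s,t)$ and $(t,s)$ are connected in $(S\times S,\phi)$, i.e. that $(S,\phi)$ is exchangeable in the sense of \Cref{def:interaction}.

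The one point that is more than a mechanical specialization of \Cref{def:irreq} is the translation, in the exchangeable case, of the connectivity conclusion into the literal statement of \Cref{def:interaction}: one must match the transition relation $\Phi_E$ of the two-point graph with the edge relation $\phi$ on $S\times S$ under the identification $\eta\mapsto(\eta_x,\eta_y)$. Here the single edge of $X$ enters through both of its orientations, so a $\Phi_E$-step is built from applying $\phi$ to $(\eta_x,\eta_y)$ as well as to the interchanged pair $(\eta_y,\eta_x)$; the step I would treat with care is to pin down, from the symmetric–digraph conventions in force, that connectivity of the transposed pair $(s,t),(t,s)$ produced by irreducible quantification is exactly connectivity in $(S\times S,\phi)$. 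Once this bookkeeping is fixed the argument is a direct unwinding of definitions, which is why the two implications can reasonably be described as easily observed.
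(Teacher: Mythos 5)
Your proposal is, in both halves, the same argument as the paper's own proof: separability is read off from the one-point graph (where $S^X=S$ and $\Phi_E=\emptyset$), and exchangeability from the complete two-point graph. The separability half is complete and matches the paper essentially verbatim.

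The step you single out at the end, however, is not bookkeeping that a careful unwinding will fix; it is a genuine gap, and it is exactly the point the paper itself glosses over when it asserts $\Phi_{E'}=\phi$ for the two-point graph. Because $E'$ contains both orientations of its single edge, the identification $\eta\mapsto(\eta_x,\eta_y)$ gives
\begin{equation*}
\Phi_{E'}\;\cong\;\phi\cup\sigma\phi,\qquad \sigma\phi:=\{((t,s),(t',s'))\mid ((s,t),(s',t'))\in\phi\},
\end{equation*}
and \Cref{def:interaction} only requires $\phi$ to be symmetric as a relation on $S\times S$, not invariant under the simultaneous swap $\sigma$, so in general $\sigma\phi\neq\phi$. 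Irreducible quantification therefore only yields that $(s,t)$ and $(t,s)$ are connected in $(S\times S,\phi\cup\sigma\phi)$, which is a priori weaker than connectivity in $(S\times S,\phi)$. Moreover, no argument can bridge this under the stated definitions: being irreducibly quantified is unchanged when $\phi$ is replaced by $\phi\cup\sigma\phi$ (for a symmetric digraph $(X,E)$ the relation $\Phi_E$ built from $\phi$ coincides with the one built from $\sigma\phi$, and $\phi$, $\sigma\phi$ have the same conserved quantities), whereas exchangeability is not. For instance, on $S=\{0,1,2\}$ let $\phi$ consist of the swaps $(0,1)\leftrightarrow(1,0)$, $(1,2)\leftrightarrow(2,1)$ and the single reaction $(1,1)\leftrightarrow(0,2)$, but \emph{not} $(1,1)\leftrightarrow(2,0)$. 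Its symmetrization $\phi\cup\sigma\phi$ is irreducibly quantified (on a connected graph, the swaps and the two reactions realize every transposition of adjacent values, and the reaction $1+1\leftrightarrow 0+2$ walks through all multisets of a given particle number), hence $(S,\phi)$ is irreducibly quantified as well; yet $(2,0)$ is an isolated vertex of $(S\times S,\phi)$, so $(S,\phi)$ is not exchangeable in the sense of \Cref{def:interaction}. So the lemma --- and with it both your proof and the paper's --- is valid only under the additional standing convention $\sigma\phi=\phi$ (which all of the paper's examples satisfy), or with exchangeability read relative to $\phi\cup\sigma\phi$. Your instinct to treat this step with care was right; it needed to be treated as a hypothesis, not as a formality.
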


\begin{proof}
    Let $(S,\phi)$ be an irreducibly quantified interaction.
    Let $(X,E)$ be a finite connected symmetric digraph such that $|X|=1$. 
    Then we have $S^X=S$ and $\Phi_E=\emptyset$. 
    Therefore, since $(S,\phi)$ is irreducibly quantified, for any $s_1,s_2\in S$ such that $\xi(s_1)=\xi(s_2)$ for any $\xi \in \Consv^\phi(S)$, we have $s_1=s_2$.
    This shows that the interaction $(S,\phi)$ is separable.

    We next consider a complete finite symmetric digraph $(X',E')$ such that $|X'|=2$. 
    Then we have $S^{X'}=S^2$ and $\Phi_E=\phi$. 
    Therefore, for any $s,t\in S$, since $\xi_{X'}(s,t)=\xi_{X'}(t,s)$ for any $\xi\in \Consv^\phi(S)$, two configurations $(s,t)$ and $(t,s)$ are in the same connected component of $(S\times S,\phi)$. 
    Hence the interaction $(S,\phi)$ is exchangeable.
\end{proof}

For any $\eta\in S^X$ and $x,y\in X$, we define $\eta^{x,y}\in S^X$ to be 
\begin{equation*}
\eta_z^{x,y}:=\left\{\begin{array}{cc}
\eta_y & z=x \\
\eta_x & z=y \\
\eta_z & \mathrm{otherwise}.
\end{array}\right.
\end{equation*}
Finally, we prove the following lemma which will be used in the proof of key lemma \Cref{lem:path-and-equation}.

\begin{lemma}\label{lem:app-exchangeable}
    Let $(S,\phi)$ be an exchangeable interaction, and let $(X,E)$ be a finite connected symmetric digraph. 
    For any $\eta\in S^X$ and $x,y\in X$, there is a path from $\eta$ to $\eta^{x,y}$ in $(S^X,\Phi_E)$. 
\end{lemma}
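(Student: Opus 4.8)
The plan is to reduce to the case where $x$ and $y$ are joined by an edge of $E$, and then to bootstrap from such single-edge swaps to an arbitrary swap using the connectedness of $(X,E)$. Throughout we may assume $x\neq y$, since otherwise $\eta^{x,y}=\eta$ and the constant path suffices.

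First I would isolate the adjacent case as a claim: for \emph{every} configuration $\zeta\in S^X$ and every edge $(u,v)\in E$, there is a path from $\zeta$ to $\zeta^{u,v}$ in $(S^X,\Phi_E)$. This is exactly where exchangeability is used. By definition $(\zeta_u,\zeta_v)$ and $(\zeta_v,\zeta_u)$ lie in the same connected component of $(S\times S,\phi)$, so there is a finite sequence $(\zeta_u,\zeta_v)=(s_0,t_0),(s_1,t_1),\dots,(s_m,t_m)=(\zeta_v,\zeta_u)$ with $((s_{j},t_{j}),(s_{j+1},t_{j+1}))\in\phi$ for each $j$. Transporting this sequence onto the edge $(u,v)$ — that is, letting the $u$- and $v$-coordinates run through $(s_j,t_j)$ while freezing every other coordinate at its value in $\zeta$ — produces configurations whose consecutive terms differ by a single $\phi$-move along $(u,v)$, hence a path in $(S^X,\Phi_E)$ from $\zeta$ to $\zeta^{u,v}$. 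It is important to state this for an arbitrary $\zeta$, not merely for the given $\eta$; this uniformity is what will make the composition step go through.

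Next I would reduce an arbitrary swap to a composition of adjacent swaps. Since $(X,E)$ is connected, choose a path $x=v_0,v_1,\dots,v_k=y$ with $(v_{i-1},v_i)\in E$ for all $i$. Let permutations of $X$ act on $S^X$ on the left by $(\pi\cdot\eta)_z=\eta_{\pi^{-1}(z)}$, so that $\eta^{u,v}=(u\,v)\cdot\eta$ for a transposition $(u\,v)$ and $\sigma\cdot(\pi\cdot\eta)=(\sigma\pi)\cdot\eta$. The key algebraic input is the conjugation identity $(x\,y)=(x\,w)(w\,y)(x\,w)$ with $w:=v_{k-1}$, which rewrites the transposition of the endpoints in terms of one adjacent transposition $(w\,y)$ and the transposition $(x\,w)$ of a strictly shorter path. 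I would now induct on $k$. For $k=1$ the swap is adjacent and the claim of the previous paragraph applies. For $k>1$, the inductive hypothesis gives a path $\eta\to\eta^{x,w}$, the adjacent claim gives a path $\eta^{x,w}\to(\eta^{x,w})^{w,y}$, and the inductive hypothesis again gives a path $(\eta^{x,w})^{w,y}\to((\eta^{x,w})^{w,y})^{x,w}$; concatenating and using the action, $((\eta^{x,w})^{w,y})^{x,w}=\big((x\,w)(w\,y)(x\,w)\big)\cdot\eta=(x\,y)\cdot\eta=\eta^{x,y}$.

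The hard part is not any single estimate but the bookkeeping at the level of configurations: each of the three swaps in the inductive step acts on a \emph{successively modified} configuration, and a coordinate swap is the permutation action $\pi\cdot(-)$ rather than an in-place relabeling, so a naive composition would not visibly produce $\eta^{x,y}$. Having stated the adjacent-swap claim for all $\zeta$ and recorded the left-action identity $\sigma\cdot(\pi\cdot\eta)=(\sigma\pi)\cdot\eta$, this reduces to the single conjugation identity above, and the remainder is routine concatenation of the paths produced in each step.
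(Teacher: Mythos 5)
Your proof is correct and takes essentially the same approach as the paper: both realize adjacent swaps $\zeta\to\zeta^{u,v}$ (for $(u,v)\in E$) by transporting a connecting path in $(S\times S,\phi)$ onto the edge, and both decompose the transposition of $x$ and $y$ into adjacent transpositions along a connecting path in $(X,E)$. The only difference is organizational: you induct on path length via the conjugation identity $(x\,y)=(x\,w)(w\,y)(x\,w)$, whereas the paper writes out the resulting forward-and-back sequence of $2k-1$ adjacent swaps explicitly.
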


\begin{proof}
    Since $(X,E)$ is connected, there is a path 
    \begin{equation*}
        x=x_0\rightarrow\cdots\rightarrow x_k=y
    \end{equation*}
    from $x$ to $y$ in $(X,E)$. 
    For any $i\in\{ 0,\dots,k-1\}$, we inductively define $\eta^i$ by $\eta^0:=\eta$ and $\eta^{i+1}:=(\eta^i)^{x_i,x_{i+1}}$.
    We note that $\eta_y^{k-1}=\eta_x$.
    Since $(S,\phi)$ is exchangeable, for any $i\in\{0,\dots,k-1\}$, there is a path from $\eta^i$ to $\eta^{i+1}$ in $(S^X,\Phi_E)$. 
    For any $i\in \{k,\dots,2k-2\}$, we next define $\eta^i$ as $\eta^{i}=(\eta^{i-1})^{x_{2k-i-2},x_{2k-i-3}}$.
    Then we have $\eta^{2k-2}=\eta^{x,y}$. 
    Since $(S,\phi)$ is exchangeable, for any $i\in\{k,\dots,2k-2\}$, there is a path from $\eta^i$ to $\eta^{i+1}$ in $(S^X,\Phi_E)$. 
    Therefore there is a path from $\eta$ to $\eta^{x,y}$ in $(S^X,\Phi_E)$. 
\end{proof}

%%%%%%%%%%%%%%%%%%%%%%%%%%%%%%%%%%%%%%%%%%%%%%%%%%%%%%
%
\section{Commutative semigroup and interaction}\label{sec:commutative-monoid}
%
%%%%%%%%%%%%%%%%%%%%%%%%%%%%%%%%%%%%%%%%%%%%%%%%%%%%%%

In this section, we will construct a commutative semigroup associated with an interaction, and prove that the question of whether an interaction is irreducibly quantified is decidable. 
We first recall the definition of a semigroup. 
We define a \textit{semigroup} $M=(M,\cdot)$ to be a pair consisting of a set $M$ and a binary operation $\cdot:M\times M\rightarrow M$ which satisfies $(a\cdot b)\cdot c=a\cdot (b\cdot c)$ for any $a,b,c\in M$.
We often write $ab:=a\cdot b$ for any $a,b\in M$. 
A semigroup $M$ is \textit{commutative} if $ab=ba$ for any $a,b\in M$.

\begin{definition}
Let $M$ be a commutative semigroup. 
\begin{enumerate}
    \item We say that an element $c\in M$ is \textit{cancellative} in $M$ if $ca=cb$ implies $a=b$ for any $a,b\in M$. 
    \item We say that $M$ is \textit{cancellative} if every elements in $M$ are cancellative. 
    \item We say that $M$ is \textit{power-cancellative} if $a^n=b^n$ implies $a=b$ for any $a,b\in M$ and $n\in\mathbb{Z}_{>0}$. 
\end{enumerate}
\end{definition}

For any monoids $M,N$, we say that a map $f:M\rightarrow N$ is a \textit{homomorphism} if for any $a,b\in M$, we have $f(ab)=f(a)f(b)$.
Now, we recall the following result.

\begin{proposition}[{\cite{CS}*{II,Corollary 7.4}}]\label{prop:embedding}
Let $M$ be a finitely generated, cancellative, power-cancellative commutative semigroup which has no identity element.
The elements of $M$ are separated by finitely many homomorphisms of $M$ into $\mathbb{N}$. 
In other words, $M$ is isomorphic to a sub-semigroup of $\mathbb{N}^d$ for some $d\in\mathbb{Z}_{>0}$.
\end{proposition}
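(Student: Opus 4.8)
The plan is to realize $M$ inside a lattice $\mathbb{Z}^d$ and then cut it down to $\mathbb{N}^d$ by finitely many non-negative linear functionals. Writing the operation additively, I would first pass to the group of differences (Grothendieck group) $G(M)$, whose elements are formal differences of elements of $M$, with $a-b$ and $a'-b'$ identified exactly when $a+b'=a'+b$ in $M$. Because $M$ is cancellative, the canonical homomorphism $\iota\colon M\to G(M)$ is injective, so it suffices to understand the image $M':=\iota(M)$ sitting inside $G(M)$, and to separate the points of $M'$ by homomorphisms into $\mathbb{N}$.

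Next I would pin down the structure of $G(M)$. Since $M$ is finitely generated, $G(M)$ is a finitely generated abelian group, hence $G(M)\cong\mathbb{Z}^d\oplus T$ with $T$ finite. The power-cancellative hypothesis forces $T=0$: if $g\in G(M)$ satisfies $ng=0$ for some $n>0$, write $g=\iota(a)-\iota(b)$; then $\iota(na)=\iota(nb)$, so $na=nb$ in $M$ by injectivity of $\iota$, and power-cancellativity gives $a=b$, whence $g=0$. Thus $G(M)$ is torsion-free and $G(M)\cong\mathbb{Z}^d$ for some $d$. This already embeds $M$ into $\mathbb{Z}^d$; the remaining work is to replace $\mathbb{Z}^d$ by $\mathbb{N}^d$.

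I would then record the consequence of $M$ having no identity element: the image $M'$ does not contain $0$, for if $\iota(e)=0$ then $\iota(e+a)=\iota(a)$ for all $a$, hence $e+a=a$ by injectivity, making $e$ an identity of $M$, a contradiction. Let $x_1,\dots,x_k$ be a finite generating set of $M$, put $g_i:=\iota(x_i)\in\mathbb{Z}^d$, and let $C\subset\mathbb{R}^d$ be the convex cone generated by $g_1,\dots,g_k$. The main obstacle is to show that $C$ is pointed, i.e. contains no line. If it were not, there would be reals $\nu_i\ge 0$, not all zero, with $\sum_i\nu_i g_i=0$. The set of such relations is the intersection of a rational subspace with the non-negative orthant, hence a rational polyhedral cone, so a nonzero relation yields a nonzero integral relation $\sum_i n_i g_i=0$ with $n_i\in\mathbb{Z}_{\ge0}$ not all zero. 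Then $x:=\sum_i n_i x_i\in M$ (a genuine element, since some $n_i\ge1$) satisfies $\iota(x)=\sum_i n_i g_i=0$, contradicting $0\notin M'$. I expect this pointedness step, and in particular the passage from a real relation to an integral one, to be the crux of the argument.

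Finally I would dualize. Because $C$ is a pointed rational cone, its dual cone $C^\vee=\{\ell\in(\mathbb{R}^d)^\ast\mid \ell|_C\ge 0\}$ is full-dimensional and rational, so it contains $d$ linearly independent functionals $\ell_1,\dots,\ell_d$ with integer coefficients. Each $\ell_j$ maps $\mathbb{Z}^d$ into $\mathbb{Z}$ and is non-negative on $C\supseteq M'$, so $f_j:=\ell_j\circ\iota\colon M\to\mathbb{N}$ is a homomorphism of semigroups. Since the $\ell_j$ are linearly independent they have trivial common kernel and hence separate the points of $\mathbb{Z}^d$, and as $\iota$ is injective the family $(f_1,\dots,f_d)$ separates the points of $M$. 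Thus $(f_1,\dots,f_d)\colon M\to\mathbb{N}^d$ is an injective homomorphism, exhibiting $M$ as a sub-semigroup of $\mathbb{N}^d$ and proving the proposition.
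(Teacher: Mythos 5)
Your proof is correct, but it takes a genuinely different route from the paper. The paper treats the proposition as essentially the cited result: Grillet's \cite{CS}*{II, Corollary 7.4} gives the embedding when the group of units is trivial, so the paper's entire proof consists of adjoining an identity $e$ to form the monoid $M'=M\cup\{e\}$, noting that no element of $M$ can be a unit in $M'$ (a product of two elements of $M$ lies in $M$, so it never equals $e$), applying the citation to $M'$, and restricting the inclusion $M'\hookrightarrow\mathbb{N}^d$ to $M$. You instead reprove the embedding theorem itself: cancellativity injects $M$ into the group of differences $G(M)$; power-cancellativity together with finite generation makes $G(M)\cong\mathbb{Z}^d$ torsion-free; absence of an identity excludes $0$ from the image, which via the passage from a real non-negative relation to an integral one forces the rational cone spanned by the generators to be pointed; and duality for rational polyhedral cones yields $d$ linearly independent integral functionals, non-negative on the cone, i.e., homomorphisms into $\mathbb{N}$ separating the points of $M$. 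All of these steps check out (including the crux you flagged, since the solution set of the relation is a rational polyhedral cone and hence, if nonzero, contains a nonzero integral point). What your approach buys is self-containedness and a transparent accounting of exactly where each hypothesis enters --- it is in essence the standard proof of Grillet's result --- at the cost of invoking Minkowski--Weyl-type facts about rational cones; the paper's argument is much shorter but defers all the real content to the literature, supplying only the bookkeeping step that converts ``no identity element'' into ``trivial group of units.''
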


\begin{proof}
In \cite{CS}*{II,Corollary 7.4}, this proposition is proved when the group of units of $M$ is trivial.
Then a disjoint union $M'=M\cup \{e\}$, where $e$ is an identity element of $M'$, is a commutative monoid. 
Every element $a\in M$ is not a unit in the commutative monoid $M'$. 
Therefore the group of units of $M'$ is trivial.
By applying \cite{CS}*{II,Corollary 7.4} to $M'$, we have an inclusion $M'\hookrightarrow \mathbb{N}^d$. 
This induces an inclusion $M\hookrightarrow \mathbb{N}^d$. 
\end{proof}

Let $S=\{1,\dots, n\}$, and let $(S,\phi)$ be an interaction. 
We define a semigroup $M(S,\phi)$ by
\begin{equation*}
    M(S,\phi):=\langle a_1,\dots,a_n\mid a_ia_j=a_ka_l,\,((i,j),(k,l))\in\phi\rangle.
\end{equation*}
In other words, $M(S,\phi)$ is a semigroup constructed by dividing a free semigroup on $\{a_1,\dots,a_n\}$ by the congruence generated by relations $\{a_ia_j=a_ka_l\mid ((i,j),(k,l))\in\phi\}$.
For any $a\in M(S,\phi)$, we define the length $\mrm{len}(a)$ to be the smallest length of the expressions of $a\in M(S,\phi)$ as a product of $a_1,\dots,a_n$.
Since relations of the semigroup $M(S,\phi)$ do not change the length of words, 
all expressions of $a\in M(S,\phi)$ as a product of $a_1,\dots,a_n$ have the same length. 
Therefore, for any $a,b\in M(S,\phi)$, we have $\mrm{len}(ab)=\mrm{len}(a)+\mrm{len}(b)$.
In particular, we have a homomorphism $\mrm{len}:M(S,\phi)\rightarrow \mathbb{N}$. 
Moreover, the semigroup $M(S,\phi)$ has no identity element. 

\begin{lemma}\label{lem:commuatative-and-exchangeable}
Let $(S,\phi)$ be an interaction. 
A semigroup $M(S,\phi)$ is commutative if and only if $(S,\phi)$ is exchangeable.    
\end{lemma}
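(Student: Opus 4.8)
The plan is to reduce commutativity of $M(S,\phi)$ to a statement about length-two words, and then to identify that statement with exchangeability through the graph structure of $\phi$.

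First I would reduce to the generators: $M(S,\phi)$ is commutative if and only if $a_ia_j=a_ja_i$ holds in $M(S,\phi)$ for every pair $i,j$. The forward implication is immediate, and the converse is the standard fact that a semigroup whose generators commute pairwise is itself commutative (move generators past one another one at a time, by induction on word length). This leaves the problem of deciding, for each pair, whether $a_ia_j=a_ja_i$.

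Second, the key point is the characterization: for length-two words one has $a_ia_j=a_ka_l$ in $M(S,\phi)$ if and only if $(i,j)$ and $(k,l)$ lie in the same connected component of $(S\times S,\phi)$. I would prove this by unwinding the defining congruence as the equivalence relation generated by elementary rewriting steps, each of which replaces a length-two subword $a_pa_q$ by $a_ra_s$ whenever $((p,q),(r,s))\in\phi$. Because every defining relation equates two words of length two, such steps preserve total length, so any chain of rewrites joining two length-two words passes only through words of length two; and a rewrite applied to a word of length two necessarily rewrites the whole word. Hence a single step carries $a_pa_q$ to $a_ra_s$ exactly when $((p,q),(r,s))\in\phi$, which by the symmetry of $\phi$ is precisely the relation of being joined by an edge in $(S\times S,\phi)$. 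Iterating, $a_ia_j=a_ka_l$ in $M(S,\phi)$ if and only if there is a path from $(i,j)$ to $(k,l)$ in $(S\times S,\phi)$.

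Finally, combining the two reductions, the generators commute pairwise if and only if $(i,j)$ and $(j,i)$ lie in the same connected component of $(S\times S,\phi)$ for all $i,j$, which is exactly the definition of exchangeability; this yields both directions of the lemma. I expect the main obstacle to be the bookkeeping in the second step --- carefully justifying that the congruence can be computed without ever leaving the set of length-two words, and that, restricted there, it coincides with the connected-component relation of $\phi$, with the symmetry of $\phi$ matching the two orientations of each rewrite against the undirected adjacency of the graph.
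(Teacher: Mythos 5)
Your proposal is correct and follows essentially the same route as the paper: both directions hinge on the observation that every defining relation of $M(S,\phi)$ equates words of length two, so equality of length-two words in $M(S,\phi)$ coincides with connectivity in the graph $(S\times S,\phi)$. The only difference is cosmetic --- you make explicit the standard reduction that pairwise-commuting generators imply commutativity of the whole semigroup, which the paper's proof uses implicitly.
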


\begin{proof}
If $(S,\phi)$ is exchangeable, for any $i,j\in S$, $(i,j)$ and $(j,i)$ are in the same connected component of $(S\times S,\phi)$. 
Therefore there is a path 
\begin{equation*}
(i,j)=(i_0,j_0)\rightarrow(i_1,j_1)\rightarrow\cdots\rightarrow(i_m,j_m)=(j,i)
\end{equation*}
in $(S\times S,\phi)$.
This implies that
\begin{equation*}
a_ia_j=a_{i_0}a_{j_0}=a_{i_1}a_{j_1}=\cdots =a_{i_m}a_{j_m}=a_ja_i
\end{equation*}
in $M(S,\phi)$. 
Therefore $M(S,\phi)$ is commutative.

Next, we assume that $M(S,\phi)$ is commutative.
For any $i',j'\in S$, we have $a_{i'}a_{j'}=a_{j'}a_{i'}$. 
Then we can get $a_{j'}a_{i'}$ by applying relations of $M(S,\phi)$ to $a_{i'}a_{j'}$ finitely many times. 
Since all relations of $M(S,\phi)$ does not change length of words, we have 
\begin{equation*}
    a_{i'}a_{j'}=a_{i'_0}a_{j'_0}=a_{i'_1}a_{j'_1}=\cdots =a_{i'_m}a_{j'_m}=a_{j'}a_{i'}
\end{equation*}
such that for any $k\in\{1,\dots,m\}$, an equality $a_{i'_k}a_{j'_k}=a_{i'_{k+1}}a_{j'_{k+1}}$ is a relation of $M(S,\phi)$. 
Therefore there is a path 
\begin{equation*}
(i',j')=(i'_0,j'_0)\rightarrow(i'_1,j'_1)\rightarrow\cdots\rightarrow(i'_m,j'_m)=(j',i')
\end{equation*}
in $(S\times S,\phi)$. 
\end{proof}

Next, we prove that $\Map^\phi(S,\mbb{R})$ and $\mathrm{Hom}(M(S,\phi),\mathbb{R}_+)$, where $\mathbb{R}_+$ is a monoid on $\mathbb{R}$ with its summation, are isomorphic $\mathbb{R}$-vector space. 

\begin{lemma}\label{lem:isom-of-vector-space}
Let $\xi\in \Consv^\phi(S)$. 
We define a map $\xi_M:M(S,\phi)\rightarrow \mathbb{R}_+$ by $\xi_M(a_i)=\xi(i)$ for any $i\in S$. 
This induces an $\mathbb{R}$-linear map 
\begin{equation*}
u:\Map^\phi(S,\mbb{R})\rightarrow \mathrm{Hom}(M(S,\phi),\mathbb{R}_+);\quad \xi\mapsto \xi_M.
\end{equation*}
Moreover, this map is an isomorphism.
\end{lemma}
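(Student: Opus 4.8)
The plan is to construct an explicit two-sided inverse to $u$, after first checking that $u$ lands in, and is linear into, the $\mbb{R}$-vector space $\Hom(M(S,\phi),\mbb{R}_+)$. I record at the outset that $\Hom(M(S,\phi),\mbb{R}_+)$ is indeed an $\mbb{R}$-vector space: since $\mbb{R}_+$ is an abelian group, the pointwise sum and the pointwise scalar multiple of two semigroup homomorphisms into $\mbb{R}_+$ are again homomorphisms, so the usual pointwise operations endow $\Hom(M(S,\phi),\mbb{R}_+)$ with the structure of a vector space over $\mbb{R}$.

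The first substantive step is to verify that $\xi_M$ is a well-defined homomorphism for each $\xi\in\Map^\phi(S,\mbb{R})$, and this is the crux of the argument. The assignment $a_i\mapsto \xi(i)$ extends uniquely to a homomorphism $\wt{\xi}$ from the free semigroup on $\{a_1,\dots,a_n\}$ into $\mbb{R}_+$, sending a word $a_{i_1}\cdots a_{i_m}$ to $\sum_{t=1}^{m}\xi(i_t)$. By the universal property of the presentation of $M(S,\phi)$, the map $\wt{\xi}$ descends along the quotient by the congruence generated by $\phi$ precisely when it respects the defining relations, that is, when $\wt{\xi}(a_ia_j)=\wt{\xi}(a_ka_l)$ for every $((i,j),(k,l))\in\phi$. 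This equality reads $\xi(i)+\xi(j)=\xi(k)+\xi(l)$, which is exactly the defining property of a conserved quantity; this is the only place where the hypothesis $\xi\in\Map^\phi(S,\mbb{R})$ is used. (Note that this forces the domain of $u$ to be $\Map^\phi(S,\mbb{R})$ rather than $\Consv^\phi(S)$: replacing $\xi$ by $\xi+c$ changes $\xi_M$ on a word of length $m$ by $mc$, so $\xi_M$ genuinely depends on the representative.) Linearity of $u$ is then immediate, since $(\xi+\xi')_M$ and $\xi_M+\xi'_M$ are homomorphisms $M(S,\phi)\to\mbb{R}_+$ agreeing on the generators $a_i$, and likewise for scalar multiples; as the $a_i$ generate $M(S,\phi)$, the two sides coincide.

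It remains to produce the inverse. I would define $v\colon \Hom(M(S,\phi),\mbb{R}_+)\to\Map^\phi(S,\mbb{R})$ by $v(f)(i):=f(a_i)$. To see that $v(f)$ is a conserved quantity, observe that for $((i,j),(k,l))\in\phi$ the relation $a_ia_j=a_ka_l$ holds in $M(S,\phi)$, whence $f(a_i)+f(a_j)=f(a_ia_j)=f(a_ka_l)=f(a_k)+f(a_l)$, which is the conserved quantity condition for $v(f)$. The composite $v\circ u$ is the identity because $v(\xi_M)(i)=\xi_M(a_i)=\xi(i)$, and $u\circ v$ is the identity because $u(v(f))$ and $f$ are homomorphisms that agree on the generators $a_i$ and hence on all of $M(S,\phi)$. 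Therefore $u$ is a linear bijection, i.e.\ an isomorphism. The only point demanding care is the well-definedness in the second paragraph — the passage from the free semigroup through the congruence generated by $\phi$ — but since this is matched exactly by the conserved quantity condition, no genuine obstacle arises.
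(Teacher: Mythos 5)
Your proof is correct and follows essentially the same route as the paper: the well-definedness check via the defining relations matching the conserved-quantity condition, and the inverse assignment $f\mapsto(i\mapsto f(a_i))$, which is exactly the paper's surjectivity argument recast as an explicit two-sided inverse. Your added remarks (the free-semigroup universal property, the vector-space structure on $\mathrm{Hom}(M(S,\phi),\mathbb{R}_+)$, and the observation that the domain must be $\Map^\phi(S,\mbb{R})$ rather than $\Consv^\phi(S)$) are welcome clarifications but do not change the substance.
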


\begin{proof}
Let $\xi\in \Map^\phi(S,\mbb{R})$. 
We first prove that $\xi_M$ is a well-defined homomorphism. 
By definition of $\xi_M$, for any relation $a_ia_j=a_ka_l$ of $M(S,\phi)$, 
we have 
\begin{equation*}
    \xi_M(a_ia_j)=\xi(i)+\xi(j)=\xi(k)+\xi(l)=\xi_M(a_ka_l).
\end{equation*}
Therefore the map $\xi_M$ is a well-defined homomorphism. 
Hence the assignment $\xi\mapsto \xi_M$ induces an $\mbb{R}$-linear map $\Map^\phi(S,\mbb{R})\rightarrow \mathrm{Hom}(M(S,\phi),\mathbb{R}_+)$.

We will prove that this map is an isomorphism. 
For any $\xi,\xi'\in \Map^\phi(S,\mbb{R})$, if for any $i\in S$, we have $\xi_M(a_i)=\xi'_M(a_i)$, then we have $\xi(a_i)=\xi'(a_i)$.
Therefore we have $\xi=\xi'$. 
Hence the map $u$ is injective.  
Next, we prove the surjectivity. 
For any $\psi\in \mathrm{Hom}(M(S,\phi),\mathbb{R})$, we define $\xi^\psi:S\rightarrow \mathbb{R}$ by $\xi^\psi(i)=\psi(a_i)$. 
For any $((i,j),(i',j'))\in \phi$, we have 
\begin{equation*}
\xi^\psi(i)+\xi^\psi(j)=\psi(a_ia_j)=\psi(a_{i'}a_{j'})=\xi^\psi(i')+\xi^\psi(j').
\end{equation*}
Therefore $\xi^\psi$ is a conserved quantity and $\xi_M^\psi=\psi$. 
Hence the map $u$ is surjective.
\end{proof}

Let $(S,\phi)$ be an exchangeable interaction, and let $(X,E)$ be a finite connected symmetric digraph. 
For any $\eta\in S^X$, we define an element $\eta_M$ of $M$ by 
\begin{equation*}
    \eta_M:=a_1^{\sharp \{x\in X\mid \eta_x=1\}}\cdots a_n^{\sharp \{x\in X\mid \eta_x=n\}}.
\end{equation*}
We prove that this assignment induces a surjection $S^X\mapsto \{a\in M(S,\phi)\mid l(a)=|X|\}$.

\begin{lemma}\label{lem:inter-to-semig-surj}
    The map $f:S\rightarrow M(S,\phi)$ induced by the assignment $\eta\mapsto \eta_M$ is surjective.
\end{lemma}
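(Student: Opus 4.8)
The plan is to exploit the commutativity of $M(S,\phi)$, which holds because $(S,\phi)$ is assumed exchangeable (\Cref{lem:commuatative-and-exchangeable}), together with the length homomorphism $\mrm{len}\colon M(S,\phi)\to\mbb{N}$ established just above. I read the statement as the surjection $S^X\to\{a\in M(S,\phi)\mid \mrm{len}(a)=|X|\}$ announced in the sentence preceding it, so the content is that every element of length $|X|$ is realized by distributing states over the $|X|$ vertices of $X$.

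First I would fix a target element $a\in M(S,\phi)$ with $\mrm{len}(a)=|X|$. Since $a_1,\dots,a_n$ generate $M(S,\phi)$, I can write $a$ as a product of generators, and because $\mrm{len}(a)=|X|$ this product has exactly $|X|$ factors. Using commutativity I would collect equal generators to obtain one expression $a=a_1^{e_1}\cdots a_n^{e_n}$ with nonnegative integers $e_1,\dots,e_n$ satisfying $e_1+\cdots+e_n=|X|$.

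Next, because the prescribed multiplicities $e_1,\dots,e_n$ add up to $|X|$, I would choose any $\eta\in S^X$ for which $\sharp\{x\in X\mid \eta_x=i\}=e_i$ for every $i$; such an $\eta$ exists precisely because the $e_i$ sum to the number of vertices of $X$. Unwinding the definition of $\eta_M$ then gives $\eta_M=a_1^{e_1}\cdots a_n^{e_n}=a$, so $a$ lies in the image. Conversely, $\mrm{len}(\eta_M)=\sum_i e_i=|X|$ for every $\eta$, so the image is contained in, hence equal to, the length-$|X|$ part.

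I expect no serious obstacle. The only point requiring a little care is that a single length-$|X|$ expression suffices: the relations $a_ia_j=a_ka_l$ may change the exponent vector, so one element can admit several such expressions, but for surjectivity choosing one of them is enough. After that, realizing a fixed exponent vector by a state assignment is simply a matter of partitioning $X$ into blocks of the prescribed sizes, which is possible exactly because those sizes total $|X|$.
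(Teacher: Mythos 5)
Your proposal is correct and takes essentially the same route as the paper: interpret the statement as a surjection onto $\{a\in M(S,\phi)\mid \mathrm{len}(a)=|X|\}$, write the target element in the form $a_1^{e_1}\cdots a_n^{e_n}$ using commutativity, and realize that exponent vector by partitioning the $|X|$ vertices of $X$ into blocks of sizes $e_1,\dots,e_n$. If anything, you are slightly more careful than the paper, which starts directly from the normal form $a_1^{p_1}\cdots a_n^{p_n}$ without remarking that exchangeability supplies it or that the exponents must sum to $|X|$.
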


\begin{proof}
    Let $a_i^{p_i}\cdots a_n^{p_n}\in M(S,\phi)$. 
    We fix a bijection $\iota:X\rightarrow\{1,\dots,|X|\}$.
    We define $\eta\in S^X$ by $\eta_x=i$ for any $x\in X$ such that $\sum_{j=1}^{i-1}p_j< \iota(i)\leq \sum_{j=1}^{i}p_j$.
    Then we have $\eta_M=a_i^{p_i}\cdots a_n^{p_n}$. 
    Therefore the map $f$ is surjective. 
\end{proof}

The following lemma is key to reducing the problem for interactions to the problem of semigroups.

\begin{lemma}\label{lem:path-and-equation}
Let $(S,\phi)$ be an exchangeable interaction, and let $(X,E)$ be a finite connected symmetric digraph. 
For any $\eta,\eta'\in S^X$, $\eta$ and $\eta'$ are in the same connected component of $(S^X,\Phi_E)$ if and only if $\eta_M=\eta'_M$ in $M(S,\phi)$.
\end{lemma}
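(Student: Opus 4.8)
The plan is to prove the two directions separately, the reverse implication carrying essentially all of the difficulty.

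For the forward direction, I would show that a single edge of $(S^X,\Phi_E)$ preserves $\eta_M$ and then induct along a path. If $(\eta,\eta')\in\Phi_E$, there is an edge $(x,y)\in E$ with $((\eta_x,\eta_y),(\eta'_x,\eta'_y))\in\phi$ and $\eta_z=\eta'_z$ for all $z\neq x,y$. Since $M(S,\phi)$ is commutative by \Cref{lem:commuatative-and-exchangeable}, I may write $\eta_M=\bigl(\prod_{z\neq x,y}a_{\eta_z}\bigr)a_{\eta_x}a_{\eta_y}$ and similarly for $\eta'_M$; the defining relation $a_{\eta_x}a_{\eta_y}=a_{\eta'_x}a_{\eta'_y}$ together with the equalities $\eta_z=\eta'_z$ gives $\eta_M=\eta'_M$. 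Iterating along a path in $(S^X,\Phi_E)$ yields the implication.

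For the reverse direction, the pivotal auxiliary step is a rearrangement principle: if $\zeta,\zeta'\in S^X$ have the same state-count vector $(r_1,\dots,r_n)$, where $r_i=\sharp\{x\in X\mid \zeta_x=i\}$, then $\zeta$ and $\zeta'$ lie in the same connected component of $(S^X,\Phi_E)$. Indeed, such a $\zeta'$ is obtained from $\zeta$ by a permutation of the vertices of $X$, hence by a product of transpositions; each transposition carries the current configuration $\theta$ to $\theta^{x,y}$, and \Cref{lem:app-exchangeable} supplies a path from $\theta$ to $\theta^{x,y}$. Chaining these paths connects $\zeta$ to $\zeta'$.

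With this in hand, I would unwind the hypothesis $\eta_M=\eta'_M$. Equality in $M(S,\phi)$ means the defining words $a_1^{p_1}\cdots a_n^{p_n}$ and $a_1^{q_1}\cdots a_n^{q_n}$ of $\eta_M$ and $\eta'_M$ are joined by a finite chain $v_0,\dots,v_m$ of length-$|X|$ words, each step replacing an adjacent pair of letters $a_ia_j$ by $a_{i'}a_{j'}$ for some $((i,j),(i',j'))\in\phi$; at the level of state-counts this deletes one $i$ and one $j$ and inserts one $i'$ and one $j'$. I realize the chain inside $(S^X,\Phi_E)$ by maintaining a configuration connected to $\eta$ whose state-count vector matches the current word. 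Fixing one edge $(x_0,y_0)\in E$ (which exists when $|X|\geq 2$, since $X$ is connected; the case $|X|=1$ is immediate because a length-one word admits no relation, forcing $\eta=\eta'$), I use the rearrangement principle to bring the values $i,j$ onto $x_0,y_0$, apply the $\Phi_E$-move furnished by $((i,j),(i',j'))\in\phi$ along that edge, and thereby reach a configuration with the state-counts of $v_{t+1}$. After processing all $m$ steps I arrive at a configuration with the state-counts of $\eta'$, and a final application of the rearrangement principle links it to $\eta'$ itself.

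The main obstacle is exactly that the elementary moves witnessing $\eta_M=\eta'_M$ act on letters adjacent \emph{in a word}, whereas moves in $(S^X,\Phi_E)$ must act along edges of the arbitrary connected graph $(X,E)$. I expect the rearrangement principle, and through it the exchangeability hypothesis via \Cref{lem:app-exchangeable}, to dissolve this mismatch by transporting the two relevant states onto a single fixed edge, thus decoupling the combinatorics of the semigroup congruence from the geometry of $E$. Some care is also needed to keep the invariant governing the rearrangement principle, namely the state-count vector, distinct from equality in $M(S,\phi)$, since the former genuinely changes at each step of the congruence chain while the latter does not.
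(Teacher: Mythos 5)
Your proof is correct and takes essentially the same approach as the paper: both directions rest on unwinding the semigroup congruence into elementary relation steps and using \Cref{lem:app-exchangeable} to transport the two relevant states onto a genuine edge of $(X,E)$ before applying the $\phi$-move, finishing with a permutation argument to reach $\eta'$ exactly. The only cosmetic difference is that you fix a single edge $(x_0,y_0)$ and bring both states to it via your rearrangement principle, whereas the paper swaps $x_i$ with a neighbor $z_i$ of $y_i$ at each step; the underlying mechanism is identical.
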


\begin{proof}
Let $\eta,\eta'\in S^X$. 
We assume that $\eta$ and $\eta'$ are in the same connected component of $(S^X,\Phi_E)$. 
Then there is a path 
\begin{equation*}
\eta=\eta^0\rightarrow\cdots\rightarrow\eta^m=\eta'
\end{equation*}
in $(S^X,\Phi_E)$. 
For any $i\in\{1,\dots,m\}$, there exist $x_i, y_i\in X$ such that $((\eta^i_{x_i},\eta^i_{y_i}),(\eta^{i+1}_{x_i},\eta^{i+1}_{y_i}))\in \phi$ and $\eta_z^i=\eta_z^{i+1}$ for any $z\neq x_i,y_i$. 
Since there is a relation $a_{\eta^i_{x_i}}a_{\eta^i_{y_i}}=a_{\eta^{i+1}_{x_i}}a_{\eta^{i+1}_{y_i}}$ of $M(S,\phi)$, we have
\begin{equation*}
\eta_M=\eta_M^0=\dots=\eta_M^m=\eta'_M 
\end{equation*}
in $M(S,\phi)$. 

On the other hand, we assume that $\eta_M=\eta'_M$.
Then we have equalities 
\begin{equation*}
\eta_M=b_0=\cdots=b_{m'}=\eta'_M
\end{equation*}
such that, for any $i\in \{0,\dots, m'-1\}$, if we write $b_i=a_1^{p_{i,1}}\cdots a_n^{p_{i,n}}$, there exist pairs $(j_i,k_i),(j'_i,k'_i)\in \{1,\dots,n\}^2$ such that 
\begin{equation*}
p_{i+1,j}=p_{i,j}-\delta_{j_i}-\delta_{k_i}+\delta_{j'_i}+\delta_{k'_i}, \quad \delta_k(l):=\begin{cases}
    1 & k=l,\\
    0 & k\neq l,
\end{cases}
\end{equation*}
and $a_{j_i}a_{k_i}=a_{j'_i}a_{k'_i}$ is a relation of $M(S,\phi)$. 
Let $\eta^0:=\eta$. 
For any $i\in\{1,\dots,m'\}$, by applying following construction ($\heartsuit$), we define $\eta^i$ inductively:
\begin{description}
    \item[($\heartsuit$)] If a given $\eta^i$ satisfies $\eta_M^i=b_i$, we can take $x_i,y_i\in X$ such that 
    \begin{equation*}
        \eta_{x_i}^i=j_i,\quad \eta_{y_i}^i=k_i.
    \end{equation*}
    We define $\eta^{i+1}$ by 
    \begin{equation*}
        \eta_z^{i+1}=\left\{
        \begin{array}{cc}
        j'_i&z=x_i\\
        k'_i&z=y_i\\
        \eta_z^i&z\neq x_i,y_i.
        \end{array}
        \right.
    \end{equation*}
    Since $\eta^{i+1}$ satisfies $\eta_M^{i+1}=b_{i+1}$, if $i+1\in \{1,\dots,m'-1\}$, then we can apply ($\heartsuit$) to $\eta^{i+1}$.  
\end{description}
We note that $\eta_M^{m'}=\eta'_M$.
For any $i\in \{1,\dots,m'\}$, take $z_i\in X$ such that $(z_i,y_i)\in E$.
Since $(S,\phi)$ is exchangeable, by \Cref{lem:app-exchangeable}, there is a path from $\eta^i$ to $(\eta^i)^{x_i,z_i}$ in $(S^X,\Phi_E)$. 
Then, by the construction, we have $((\eta^i)^{x_i,z_i},(\eta^{i+1})^{x_i,z_i})\in \Phi_E$. 
By applying \Cref{lem:app-exchangeable} again, there is a path from $\eta^i$ to $\eta^{i+1}$ in $(S^X,\Phi_E)$.
Since $\eta_M^{m'}=\eta'_M$, by applying \Cref{lem:app-exchangeable} finitely many times, we obtain a path from $\eta^{m'}$ to $\eta'$ in $(S^X,\Phi_E)$. 
Therefore we conclude that $\eta$ and $\eta'$ are in the same connected component of $(S^X,\Phi_E)$. 
\end{proof}

Now, we reduce the irreducible quantifiability of interactions to the cancellativity and power-cancellativity of the semigroups induced by them, as follows.

\begin{theorem}\label{thm:ccpcs-is-irred-q}
 A separable, exchangeable interaction $(S,\phi)$ is irreducibly quantified if and only if the commutative semigroup $M(S,\phi)$ is cancellative and power-cancellative. 
\end{theorem}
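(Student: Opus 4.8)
The plan is to introduce an intermediate condition phrased purely in the semigroup $M\coloneqq M(S,\phi)$ and to show it is equivalent both to irreducible quantifiability and to cancellativity together with power-cancellativity. Throughout, recall that $M$ is commutative by \Cref{lem:commuatative-and-exchangeable} since $(S,\phi)$ is exchangeable, and that $\mrm{len}\colon M\to\mbb{N}$ is a homomorphism with $\mrm{len}(a)\geq 1$ for every $a$, since $M$ has no identity. Say that \emph{the homomorphisms into $\mbb{R}_+$ separate the points of $M$} if, for all $a,b\in M$, the equality $\psi(a)=\psi(b)$ for every homomorphism $\psi\colon M\to\mbb{R}_+$ forces $a=b$. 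I will prove that $(S,\phi)$ is irreducibly quantified if and only if the points of $M$ are separated by $\Hom(M,\mbb{R}_+)$, and then that this separation property holds if and only if $M$ is cancellative and power-cancellative.

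First I would establish the equivalence with irreducible quantifiability. The bridge is the identity $\xi_X(\eta)=\xi_M(\eta_M)$, immediate from the definitions of $\xi_X$ and $\eta_M$, so that for fixed $\eta,\eta'\in S^X$ the condition that $\xi_X(\eta)=\xi_X(\eta')$ for all conserved quantities is, via the isomorphism $\Map^\phi(S,\mbb{R})\cong\Hom(M,\mbb{R}_+)$ of \Cref{lem:isom-of-vector-space}, exactly the condition that $\psi(\eta_M)=\psi(\eta'_M)$ for all $\psi\colon M\to\mbb{R}_+$; since $\mrm{len}(\eta_M)=|X|=\mrm{len}(\eta'_M)$, changing the representative of a class in $\Consv^\phi(S)$ by a constant alters both sides equally and is harmless. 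By \Cref{lem:path-and-equation}, lying in the same connected component of $(S^X,\Phi_E)$ is equivalent to $\eta_M=\eta'_M$. For the forward implication, given $a,b\in M$ with $\psi(a)=\psi(b)$ for all $\psi$, applying $\mrm{len}$ gives $\mrm{len}(a)=\mrm{len}(b)=:N\geq 1$; choosing any connected symmetric digraph $(X,E)$ with $|X|=N$ and using the surjectivity of \Cref{lem:inter-to-semig-surj} to realize $a=\eta_M$ and $b=\eta'_M$, irreducible quantifiability yields $\eta_M=\eta'_M$, i.e.\ $a=b$. The reverse implication simply reads this translation backwards.

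Next I would prove that the points of $M$ are separated by $\Hom(M,\mbb{R}_+)$ if and only if $M$ is cancellative and power-cancellative. The easy direction exploits that $\mbb{R}_+$ is a cancellative torsion-free group: if $ca=cb$ then cancelling $\psi(c)$ gives $\psi(a)=\psi(b)$ for every $\psi$, and if $a^n=b^n$ then $n\psi(a)=n\psi(b)$ forces $\psi(a)=\psi(b)$; in either case the separation property yields $a=b$. For the converse I would invoke \Cref{prop:embedding}: since $M$ is finitely generated, commutative, cancellative, power-cancellative, and has no identity element, its points are separated by finitely many homomorphisms $M\to\mbb{N}$, and as $\mbb{N}\subset\mbb{R}_+$ these are in particular homomorphisms into $\mbb{R}_+$.

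The main obstacle is the first equivalence, where one must match the quantification over all finite connected symmetric digraphs $(X,E)$ with the quantification over all pairs $a,b\in M$: this is precisely where the surjectivity onto each length stratum in \Cref{lem:inter-to-semig-surj}, the existence of a connected symmetric digraph of each prescribed size, and the compatibility between $\Consv^\phi(S)$ and $\Map^\phi(S,\mbb{R})$ through the length homomorphism all enter, while the genuinely deep input is confined to \Cref{prop:embedding}. I remark that the separability hypothesis is in fact not used in either implication: it is automatic once $M$ is cancellative and power-cancellative, since then the coordinate homomorphisms of $M\hookrightarrow\mbb{N}^d$ separate the distinct generators $a_i$, and it is forced in the irreducibly quantified case by \Cref{lem:irred-q-implies-exchangeable}; it is retained only because it is harmless and aligns with the decidability workflow.
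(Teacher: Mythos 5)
Your proposal is correct, and its mathematical core coincides with the paper's own proof: \Cref{lem:isom-of-vector-space,lem:inter-to-semig-surj,lem:path-and-equation,prop:embedding} together with the length homomorphism enter in exactly the same roles. The differences are one of organization plus one genuine refinement. Organizationally, you route the biconditional through the explicit intermediate condition that $\Hom(M(S,\phi),\mbb{R}_+)$ separates points of $M(S,\phi)$; in the paper this condition never gets a name, and its ``easy direction'' (cancel $\psi(c)$ in $\mbb{R}$, divide by $n$) is precisely the paper's passage from $\xi_M(ac)=\xi_M(bc)$, resp.\ $\xi_M(a^N)=\xi_M(b^N)$, to $\xi_M(a)=\xi_M(b)$, while its hard direction is the same appeal to \Cref{prop:embedding}. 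The refinement concerns the length-one case in the forward implication: the paper disposes of $\mrm{len}(a)=1$ by invoking the separability hypothesis, whereas you run the realization argument on the single-vertex digraph, which is legitimate under the paper's conventions --- that same graph is exactly what \Cref{lem:irred-q-implies-exchangeable} uses to show that irreducible quantifiability forces separability --- so irreducible quantifiability itself closes that case. This is why your closing remark is right: separability is a redundant hypothesis, being implied by irreducible quantifiability on one side and by the embedding $M(S,\phi)\hookrightarrow\mbb{N}^d$ on the other (coordinate homomorphisms separate the distinct generators $a_i$, which are never identified by the length-preserving relations, hence separate distinct states). Your argument therefore proves the statement for all exchangeable interactions; what the paper's phrasing buys is only alignment with its decidability pipeline, where separability is tested separately anyway, while yours buys a marginally stronger and cleaner theorem.
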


\begin{proof}
We assume that $(S,\phi)$ is irreducibly quantified. 
We first prove that $M(S,\phi)$ is cancellative. 
Let $a,b,c\in M(S,\phi)$ such that $ac=bc$.
By \Cref{lem:isom-of-vector-space}, for any $\xi\in\mathrm{Map^\phi(S,\mbb{R})}$, since $\xi_M(ac)=\xi_M(bc)$, we have $\xi_M(a)=\xi_M(b)$. 
In particular, if we let $\xi$ be a constant function, it induces $\mrm{len}(a)=\mrm{len}(b)$.
If $\mrm{len}(a)=1$, since $(S,\phi)$ is separable, we have $a=b$. 
Therefore we may assume that $\mrm{len}(a)\geq 2$. 
Let $(X,E)$ be a complete finite symmetric digraph of size $\mrm{len}(a)$.
By \Cref{lem:inter-to-semig-surj}, there exist $\eta^a$ and $\eta^b\in S^X$ such that $\eta_M^a=a$ and $\eta_M^b=b$, respectively.  
For any $\xi\in \Map^\phi(S,\mbb{R})$, since $\xi_M(a)=\xi_M(b)$, we have $\xi_X(\eta^a)=\xi_X(\eta^b)$. 
Therefore, since $(S,\phi)$ is irreducibly quantified, $\eta^a$ and $\eta^b$ are in the same connected component of $(S^X,\Phi_E)$. 
By \Cref{lem:path-and-equation}, we have $a=b$. 
Therefore $M(S,\phi)$ is cancellative. 

We next prove that $M(S,\phi)$ is power-cancellative. 
Let $a,b\in M(S,\phi)$. 
We assume that there is $N\in\mathbb{N}$ such that $a^N=b^N$. 
By \Cref{lem:isom-of-vector-space}, we have $\xi_M(a^N)=\xi_M(b^N)$ for any $\xi\in \Map^\phi(S,\mbb{R})$. 
Therefore we have $\xi_M(a)=\xi_M(b)$ for any $\xi\in \Map^\phi(S,\mbb{R})$.
By a similar argument as above, we have $a=b$. 
Therefore $M(S,\phi)$ is power-cancellative. 

On the other hand, we next assume that $M(S,\phi)$ is cancellative and power-cancellative. 
Let $(X,E)$ be a finite connected symmetric digraph, and let $\eta,\eta'\in S^X$ such that $\xi_x(\eta)=\xi_X(\eta')$ for any $\xi\in \Map^\phi(S,\mbb{R})$. 
% By \Cref{prop:complete-graph}, since $(S,\phi)$ is exchangeable, we may assume $(X,E)$ is complete. 
By \Cref{prop:embedding}, we have an inclusion $\pi:M(S,\phi)\hookrightarrow \mathbb{N}^d$.
For each $i=1,\dots,d$, we let $\pi_i$ denote the projection $\mathbb{N}^d\rightarrow \mathbb{N}$ on the $i$-th component. 
Then the composition $\pi_i\circ \pi$ is a homomorphism $M(S,\phi)\rightarrow \mathbb{N}\subset \mbb{R}_+$. 
By \cref{lem:isom-of-vector-space}, for any $i\in\{1,\dots,d\}$, there is $\xi^i$ such that $\xi_M^i=\pi_i\circ \pi$.
Therefore we have 
\begin{equation*}
\pi_i\circ \pi(\eta_M)=\xi_X^i(\eta)=\xi_X^i(\eta')=\pi_i\circ \pi(\eta'_M)    
\end{equation*}
for any $i\in\{1,\dots,d\}$. 
Since $\pi$ is an inclusion, we have $\eta_M=\eta'_M$. 
Therefore, by \Cref{lem:path-and-equation}, $\eta$ and $\eta'$ are in the same connected component in $(S^X,\Phi_E)$, and hence we conclude that $(S,\phi)$ is irreducibly quantified. 
\end{proof}

The main theorem of this paper is as follows. 

\begin{theorem}\label{thm:decidable}
 The question of whether an interaction $(S,\phi)$ is irreducibly quantified is \textbf{decidable}. 
\end{theorem}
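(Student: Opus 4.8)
The plan is to assemble the decidability statements and reductions proved above into a single terminating procedure, and then to supply the one ingredient that \Cref{thm:no} does not directly provide, namely the decidability of power-cancellativity.

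First I would perform two preliminary tests. By \Cref{prop:exchangeable-is-decidable} we can decide whether $(S,\phi)$ is exchangeable, and by \Cref{lem:separable-decide} we can decide whether it is separable. Since, by \Cref{lem:irred-q-implies-exchangeable}, every irreducibly quantified interaction is both exchangeable and separable, if either test fails we may at once return that $(S,\phi)$ is \emph{not} irreducibly quantified. This settles every interaction outside the exchangeable-and-separable case.

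In the remaining case $(S,\phi)$ is exchangeable and separable. Then \Cref{lem:commuatative-and-exchangeable} shows that $M(S,\phi)$ is a finitely presented commutative semigroup, and \Cref{thm:ccpcs-is-irred-q} reduces the problem to deciding whether $M(S,\phi)$ is cancellative and power-cancellative. Cancellativity is decidable directly by \Cref{thm:no}; if $M(S,\phi)$ is not cancellative we return that $(S,\phi)$ is not irreducibly quantified.

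The main obstacle is power-cancellativity, since \Cref{thm:no} speaks only of cancellativity; I would treat it by a separate argument invoked only after cancellativity has been confirmed. The key observation is that a cancellative commutative semigroup $M$ is power-cancellative if and only if its group of fractions $G$ is torsion-free: every element of $G$ has the form $ab^{-1}$ with $a,b\in M$, the equation $a^n=b^n$ is equivalent in $G$ to $(ab^{-1})^n=1$, and hence the implication $a^n=b^n\Rightarrow a=b$ holds for all $a,b$ and all $n>0$ exactly when $G$ contains no nontrivial torsion. For $M(S,\phi)$ the group of fractions is obtained by groupifying the presentation: it is the finitely generated abelian group $\mathbb{Z}^n/R$, where $R$ is the subgroup generated by the vectors $e_i+e_j-e_k-e_l$ ranging over all $((i,j),(k,l))\in\phi$. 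Whether such a group is torsion-free is decided by computing the Smith normal form of the relation matrix and checking that no invariant factor exceeds $1$. Combining this test with \Cref{thm:no} and the reductions above produces an algorithm that always halts with the correct answer, which establishes decidability.
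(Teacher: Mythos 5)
Your proposal is correct and takes essentially the same route as the paper: the same preliminary exchangeability and separability tests with rejection via \Cref{lem:irred-q-implies-exchangeable}, the same reduction through \Cref{thm:ccpcs-is-irred-q} to cancellativity and power-cancellativity of $M(S,\phi)$, \Cref{thm:no} for cancellativity, and for power-cancellativity the same criterion of torsion-freeness of the groupified presentation $\mathbb{Z}^n/R$ checked by Smith normal form. The only difference is cosmetic: you prove the equivalence between power-cancellativity and torsion-freeness of the group of fractions directly (correctly, using that cancellativity embeds $M$ into its group of fractions), whereas the paper cites \cite{CS}*{II, Proposition 5.4} for this fact.
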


\begin{proof}
Let $(S,\phi)$ be an interaction. 
By \Cref{prop:exchangeable-is-decidable} and \Cref{lem:separable-decide}, we can decide whether $(S,\phi)$ is exchangeable and separable. 
By \Cref{lem:irred-q-implies-exchangeable}, if the interaction $(S,\phi)$ is not exchangeable or separable, it cannot be irreducibly quantified. 
Therefore we may assume that the interaction $(S,\phi)$ is exchangeable and separable in the following. 

By \Cref{thm:ccpcs-is-irred-q}, it suffices to show that the question of whether, for an exchangeable, separable interaction $(S,\phi)$, the induced semigroup $M(S,\phi)$ is cancellative and power-cancellative.  
By \cite{NO89}*{Theorem 5.8}, in general, the question of whether a finitely presented commutative semigroup is cancellative is decidable. 
Therefore the question of whether a commutative semigroup $M(S,\phi)$ is cancellative is decidable. 

A cancellative commutative semigroup $M$ can be embedded into the Grothendieck group 
\begin{equation*}
G(M)=\mathbb{Z}^n/\{\mathbbm{e}_i+\mathbbm{e}_j-\mathbbm{e}_{i'}-\mathbbm{e}_{j'}=0\mid ((i,j),(i',j'))\in \phi\},
\end{equation*}
where $\mathbbm{e}_i$ for $i\in \{1,\dots,n\}$ are standard basis of $\mathbb{Z}^n$.  
By \cite{CS}*{II, Proposition 5.4}, a commutative semigroup $M$ is power-cancellative if and only if $G(M)$ is torsion-free. 
The question of whether the abelian group $G(M)$ is torsion-free is decidable by computing the Smith normal of $n\times |\phi|$-matrix
\begin{equation*}
A=(\mathbbm{e}_i+\mathbbm{e}_j-\mathbbm{e}_{i'}-\mathbbm{e}_{j'})_{((i,j),(i',j'))\in \phi}.
\end{equation*}
Therefore we conclude that the question of whether an exchangeable interaction $(S,\phi)$ is irreducibly quantified is decidable. 
\end{proof}

%%%%%%%%%%%%%%%%%%%%%%%%%%%%%%%%%%%%%%%%%%%%%%%%%%%%%%
%
\section{Appendix: List of irreducibly quantified interactions}\label{sec:appendix}
%
%%%%%%%%%%%%%%%%%%%%%%%%%%%%%%%%%%%%%%%%%%%%%%%%%%%%%%

In this section, we give a list of irreducibly quantified interactions for $S = \{0,1,2,3,4\}$. 
Source codes for this computation can be found in \cite{Irrq} and \cite{List}.
We introduce the concept of equivalence of interactions. 

\begin{definition}
    Let $(S,\phi)$ and $(S',\phi')$ are interactions.
    We say that $(S,\phi)$ and $(S',\phi')$ are \textit{equivalent} if there exists a bijection $S\cong S'$ such that the induced map $\Map(S',\mbb{R})\cong \Map(S,\mbb{R})$ induces an $\mbb{R}$-linear isomorphism $\Consv^{\phi'}(S')\cong \Consv^\phi(S)$.
\end{definition}

In this list of \Cref{fig:irreq} on \Cpageref{fig:irreq}, we identify interactions which are equivalent to each other. 
In other words, each irreducibly quantified interaction for $S = \{0,1,2,3,4\}$ is equivalent to one of the interactions in \Cref{fig:irreq}. 
Each caption represents a basis of conserved quantities of the interaction. 
For any $i=1,\dots,5$, $\xi_i:S\rightarrow \mbb{R}$ is the function such that $\xi_i(j)=\delta_{ij}$.

\begin{remark}
    In general, the equivalence of interaction does not reflect the irreducibly quantifiedness.
    For example, for the interaction \Cref{fig:irreq} (D), if we remove edges $(2,2)\leftrightarrow (0,4), (4,0)$ and add an edge $(0,4)\leftrightarrow (4,0)$, we obtain the new exchangeable, separable interaction $(S,\phi')$ which is equivalent to the interaction \Cref{fig:irreq} (D) by the identity map.
    However, this interaction $(S,\phi')$ is not irreducibly quantified. 
    In fact, if $(X,E)$ is a complete symmetric digraph on $2$ vertices and $\eta=(2,2), \eta'=(0,4)\in S^X$, then we have $\xi_X(\eta)=\xi(\eta')$ for any $\eta\in \Consv^{\phi'}(S)$, however $\eta$ and $\eta'$ are in different connected components of $(S^X,\Phi'_E)$ which is a graph on configuration space associated to $(S,\phi')$.
\end{remark}

We also give a list of interactions for $S=\{0,1,2,3,4\}$ which are exchangeable and separable but not irreducibly quantified.
See \Cref{fig:not-irreq} on \Cpageref{fig:not-irreq}. 
Each of them has a counterexample on the complete symmetric digraph $(X,E)$ on $3$ vertices. 
In fact, the counterexamples are 
\begin{description}
    \item[\Cref{fig:not-irreq} (A)] $\eta=(0,4,4)$ and $\eta'=(2,2,2)$, 
    \item[\Cref{fig:not-irreq} (B)] $\eta=(0,3,4)$ and $\eta'=(1,1,1)$, 
    \item[\Cref{fig:not-irreq} (C)] $\eta=(0,2,3)$ and $\eta'=(1,1,4)$.
\end{description}
By easy computation, we can check that for each pair $(\eta,\eta')$, we have $\xi_X(\eta)=\xi_X(\eta')$ for any $\xi\in \Consv^\phi(S)$, but they are in different connected components of $(S^X,\Phi_E)$.

\begin{remark}
    For the case $S=\{0,1,2\}$ and $S=\{0,1,2,3\}$, any interaction which is exchangeable and separable is irreducibly quantified. 
    The proof and more detailed classifications for these cases are provided in the forthcoming paper \cite{BKSWY}.
\end{remark}

%%%%%%%%%%%%%%%%%%%%%%%%%%%%%%%%%%%%%%%%%%%%%%%%%
%
\section*{Acknowledgement}
%
%%%%%%%%%%%%%%%%%%%%%%%%%%%%%%%%%%%%%%%%%%%%%%%%%
The author is grateful to my advisor Kenichi Bannai for his support throughout the project.
We would like to thank members of the Hydrodynamic Limit Seminar at Keio/RIKEN. 
In particular, I would like to thank Jun Koriki for the advice he provided while writing the program to list up the irreducibly quantified interactions.
This work was supported by RIKEN Junior Research Associate Program.

\begin{figure}[b]
\centering
\input{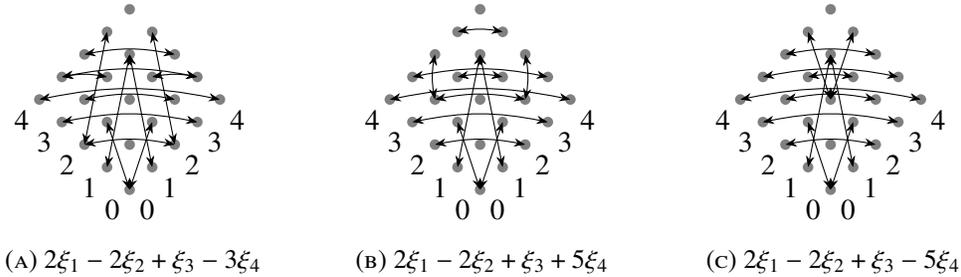}
\caption{Interactions for $S = \{0,1,2,3,4\}$ which is exchangeable and separable but not irreducibly quantified.}
\label{fig:not-irreq}
\end{figure}

\begin{figure}[p]
\centering
\input{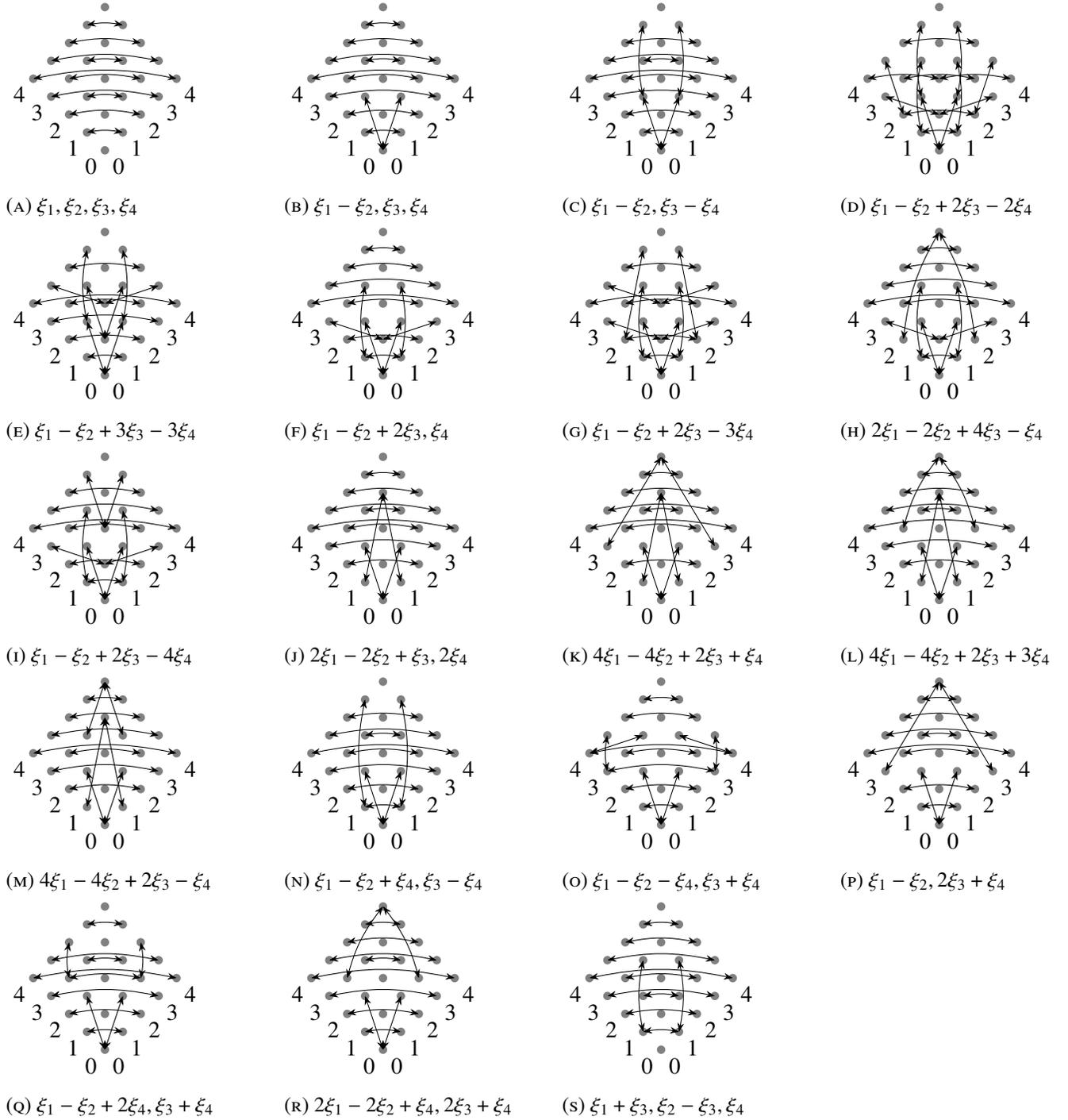}
\caption{Irreducibly quantified interactions for $S = \{0,1,2,3,4\}$.}
\label{fig:irreq}
\end{figure}

\begin{bibdiv}
   \begin{biblist}
\bib{BKS}{article}{
   author={Bannai, K.},
   author={Kametai, Y.},
   author={Sasada, M.},
   title={Topological Structures of large-scale Interacting Systems via Uniform Functions and Forms},
   note={arXiv:2009.04699, to appear in Forum of Mathematics, Sigma},
}
\bib{BKSWY}{article}{
   author={Bannai, K.},
   author={Koriki, J.},
   author={Sasada, M.},
   author={Wachi, H.},
   author={Yamamoto, S.},
   title={On interactions for large-scale interacting system},
   note={arXiv:2410.06778},
}
\bib{BS3}{article}{
    author={Bannai, K.},
    author={Sasada, M.},
    title={Varadhan's Decomposition of Shift-Invariant Closed $L^2$-forms for large-scale Interacting Systems on the Euclidean Lattice},
    note={arXiv:2111.08934},
}
\bib{BS:Unif}{article}{
    author={Bannai, K.},
    author={Sasada, M.},
    title={On Uniform Functions on Configuration Spaces of large-scale Interacting Systems},
    note={arXiv:2408.12886},
}
\bib{CS}{book}{
   author={Grillet, P. A.},
   title={Commutative semigroups},
   series={Advances in Mathematics (Dordrecht)},
   volume={2},
   publisher={Kluwer Academic Publishers, Dordrecht},
   date={2001},
   pages={xiv+436},
   isbn={0-7923-7067-8},
   review={\MR{2017849}},
   doi={10.1007/978-1-4757-3389-1},
}
\bib{NO89}{article}{
   author={Narendran, Paliath},
   author={\'{O}'D\'{u}nlaing, Colm},
   title={Cancellativity in finitely presented semigroups},
   journal={J. Symbolic Comput.},
   volume={7},
   date={1989},
   number={5},
   pages={457--472},
   issn={0747-7171},
   review={\MR{0999514}},
   doi={10.1016/S0747-7171(89)80028-8},
}
\bib{Irrq}{webpage}{
   author={Wachi, H},
   url={https://github.com/r13ef/IsIrreduciblyQunatified},
   date={2024},
   note={(Accessed on 10/15/2024)}
}
\bib{List}{webpage}{
   author={Wachi, H},
   url={https://github.com/r13ef/ListUpInteractions},
   date={2024},
   note={(Accessed on 10/15/2024)}
}
    \end{biblist} 
\end{bibdiv}

\end{document}